\theoremstyle{definition}
\newtheorem*{example}{Example}
\theoremstyle{plain}
\newtheorem{theorem}{Theorem}[section]
\newtheorem{corollary}[theorem]{Corollary}
\newtheorem{lemma}[theorem]{Lemma}
\newtheorem{proposition}{Proposition}[section]
\theoremstyle{remark}
\newtheorem{remark}{Remark}
\renewcommand{\cfrac}[2]{\ \begin{array}{c}\multicolumn{1}{c|}{#1}\\ \hline\multicolumn{1}{|c}{#2}\end{array}\ }
\newcommand{\specialcell}[2][c]{\begin{tabular}[#1]{@{}c@{}}#2\end{tabular}}
\begin{document}

\begin{frontmatter}

% "Title of the paper"
\title{Optimal Uncertainty Quantification on moment class using Canonical Moments}
\runtitle{Canonical Moments for OUQ}

% indicate corresponding author with \corref{}
% \author{\fnms{John} \snm{Smith}\corref{}\ead[label=e1]{smith@foo.com}\thanksref{t1}}
% \thankstext{t1}{Thanks to somebody} 
% \address{line 1\\ line 2\\ printead{e1}}
% \affiliation{Some University}

\author{\fnms{J\'er\^ome} \snm{Stenger$^{1,2}$}\corref{}\ead[label=e1]{jerome.stenger@edf.fr}},
\author{\fnms{Fabrice} \snm{Gamboa$^{1}$}\ead[label=e4]{fabrice.gamboa@math.univ-toulouse.fr}},
\author{\fnms{Merlin} \snm{Keller$^{2}$}\ead[label=e2]{merlin.keller@edf.fr}},
\and
\author{\fnms{Bertrand} \snm{Iooss$^{1,2}$}\ead[label=e3]{bertrand.iooss@edf.fr}}

\address{EDF R$\&$D \\ 6 quai Watier\\ 78401 Chatou \\ \printead{e1} \\ \printead{e2,e3}}
\address{Universit\'e Paul Sabatier\\ 118 Route de Narbonne \\ 31400 Toulouse \\ \printead{e4}}
\affiliation{$^1$Universit\'e de Toulouse Paul Sabatier, $^2$EDF R$\&$D}
\runauthor{Stenger et al.}

\begin{abstract}
We gain robustness on the quantification of a risk measurement by accounting for all sources of uncertainties tainting the inputs of a computer code. We evaluate the maximum quantile over a class of distributions defined only by constraints on their moments. The methodology is based on the theory of canonical moments that appears to be a well-suited framework for practical optimization. 
\end{abstract}

%\begin{keyword}[class=MSC]
%\kwd[Primary ]{}
%\kwd{}
%\kwd[; secondary ]{}
%\end{keyword}

\begin{keyword}
\kwd{canonical moments}
\kwd{robustness}
\kwd{uncertainty quantification}
\end{keyword}

\end{frontmatter}

% AOS,AOAS: If there are supplements please fill:
%\begin{supplement}[id=suppA]
%  \sname{Supplement A}
%  \stitle{Title}
%  \slink[doi]{10.1214/00-AOASXXXXSUPP}
%  \sdatatype{.pdf}" 
%  \sdescription{Some text}
%\end{supplement}

\section{Introduction} 

Uncertainty quantification methods address problems related to  with real world variability. Generally, an engineering system is represented by a numerical function $Y=G(X)$ \citep{smith_uncertainty_2014}, whose inputs $X\in\mathbb{R}^p$ are uncertain and modeled by random variables. The variable of interest is the scalar output of the computer code, but the statistician rather work with some quantities of interest, for example, a quantile, a probability of failure, or any measures of risk. Uncertainty quantification aims to characterize how the variability of a system and its model affect the quantity of interest (\cite{derocquigny_model_2012}, \cite{baudin_title_2017}).

We propose to gain robustness on the quantification of this measure of risk. Usually input values are simulated from an associated joint probability distribution. This distribution is often chosen in a parametric family, and its parameters are estimated using a sample and/or the opinion of an expert. However the difference between the probabilistic model and the reality induces uncertainty. The uncertainty on the input distributions is propagated to the quantity of interest, as a consequence, different choices of input distributions will lead to different values of the risk measures.

To consider this uncertainty, we propose to evaluate the maximum risk measure over a class of distributions. Different classes are suggested in the literature mainly discussed in the work of Berger and Hartigan in the context of Robust Bayesian Analysis (see \cite{ruggeri_robust_2005}). They consider for example the generalized moment set \citep{deroberts_bayesian_1981} or the $\varepsilon$-contamination set \citep{sivaganesan_ranges_1989}. The generalized moment set has some really nice properties studied by \cite{winkler_extreme_1988} based on the well known Choquet theory \citep{choquet_lectures_2018}. An extension of Winkler's work has been more recently published by \cite{owhadi_optimal_2013} under the name of Optimal Uncertainty Quantification (OUQ). In this paper we will focus on classes of measures specified by classical moment constraints. This is a particular case of the framework introduced by \cite{owhadi_optimal_2013} justified by our industrial context, mainly related to nuclear safety issues (\cite{wallis_uncertainty_2004}, \cite{prosek_nuclear_2007}). Indeed, in practice the estimation of the input distributions, built with the help of the expert, often relies only on the knowledge of the mean or the variance of the input variables. 

One of the main problems is the computational complexity of the optimization of the risk measure over the given class of distribution. In the moment context, Semi-Definite-Programming \citep{henrion_gloptipoly_2009} has been already already explored by \cite{betro_robust_2005} and \cite{lasserre_moments_2010}, but the deterministic solver rapidly reaches its limitation as the dimension of the problem increases. One can also find in the literature a Python toolbox developed by \cite{mckerns_optimal_2012} called Mystic framework that fully integrates the OUQ framework. However, it was built as a generic tool for generalized moment problems and the enforcement of the moment constraints is not optimal. By restricting the work to classical moment sets, we propose an original and practical approach based on the theory of canonical moments \citep{dette_theory_1997}. Canonical moments of a measure can be seen as the relative position of its moment sequence in the moment space. It is inherent to the measure and therefore present many interesting properties. The performance of our algorithms exceeds all other methods so far.

The paper proceeds as follows. Section \ref{sec: OUQ principles} describes the OUQ framework and the OUQ reduction theorem. A short introduction to the theory of canonical moments is then presented in Section \ref{sec: Theory and methodology}. The algorithms and the methodology used for practical optimization are presented in Section \ref{sec: Algorithms}. Section \ref{sec: Numerical tests on toy example} and \ref{sec: Real case study} are dedicated to the presentation of two numerical tests, one is a toy example, the other is a real case. Section \ref{sec: Conclusion} gives some conclusions and perspectives.

\section{OUQ principles} 
\label{sec: OUQ principles}

\subsection{Reduction theorem} 
In this work, we consider the quantile of the output of a computer code $G:\mathbb{R}^p\rightarrow \mathbb{R}$, seen as a black box function. In order to gain robustness on the risk measurement, our goal is to find the maximal quantile over a class of distributions. More precisely we study
\begin{eqnarray}
    \overline{Q}_\mathcal{A}(\alpha) & = & \sup_{\mu \in \mathcal{A}} \bigg[ \inf \left\lbrace h  \in \mathbb{R}\ | \ F_{\mu}(h) \geq \alpha \right\rbrace \bigg] \label{eq : Objective value} \ , \\
                                & = & \sup_{\mu \in \mathcal{A}} \bigg[ \inf \left\lbrace h  \in \mathbb{R}\ | \ P_{\mu} (G(X) \leq h) \geq \alpha \right\rbrace \bigg] \ , \nonumber
\end{eqnarray}
where $ \mathcal{A}$ is the optimization set, i.e a subset of all probability measures. Our assumption is that all the scalar inputs of the code are bounded and mutually independent. Further, we enforce $N_i$ moment constraints on the distribution $\mu_i$ of the $i$th parameter. Hence, we have
\begin{eqnarray}
	\mathcal{A} & = & \bigg\{ \mu = \otimes \mu_i \in \bigotimes_{i=1}^{p} \mathcal{M}_1 ([l_i,u_i])\; | \; \mathbb{E}_{\mu_i}[x^j] = c_{j}^{(i)} \ ,  \label{eq : Optimization set} \\ & & \quad  c_{j}^{(i)}\in\mathbb{R}, \text{ for } 1\leq j\leq N_i \text{ and } 1\leq i\leq  p \bigg\}\ , \nonumber
\end{eqnarray}
The objective value \eqref{eq : Objective value} is not very convenient to work with, we transform the expression so that the OUQ reduction theorem can be applied (Theorem \ref{th : OUQ reduction theorem} \citep{owhadi_optimal_2013}). The following result, illustrated in Figure \ref{fig : Duality transformation}, can be interpreted as a duality transformation of our problem. The proof is obvious and is postponed to Appendix \ref{app : proof}. 
\begin{proposition} The following duality result holds
    \[ \overline{Q}_\mathcal{A}(p) = \inf \left\lbrace h  \in \mathbb{R}\; | \; \inf_{\mu\in\mathcal{A}} F_{\mu}(h) \geq p \right\rbrace\ . \]
    \label{THM : DUALITY THEOREM}
\end{proposition}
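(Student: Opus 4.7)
Write $q_\mu(p) := \inf\{h \in \mathbb{R} \mid F_\mu(h) \geq p\}$, so that the left-hand side of the claimed identity is $\sup_{\mu \in \mathcal{A}} q_\mu(p)$, while the right-hand side is $\inf A$, where $A := \{h \in \mathbb{R} \mid \inf_{\mu \in \mathcal{A}} F_\mu(h) \geq p\}$. The plan is simply to show that $A$ coincides with the half-line $[\sup_{\mu \in \mathcal{A}} q_\mu(p), \infty)$, from which the result is immediate.

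\textbf{Key steps.} First, I will observe the set identity
\[
A \;=\; \{h \in \mathbb{R} \mid F_\mu(h) \geq p \text{ for every } \mu \in \mathcal{A}\} \;=\; \bigcap_{\mu \in \mathcal{A}} \{h \in \mathbb{R} \mid F_\mu(h) \geq p\},
\]
which rewrites $A$ as an intersection indexed by $\mathcal{A}$. Next, for each fixed $\mu$, I will use that $F_\mu$ is non-decreasing and right-continuous (as a cumulative distribution function); this implies that the slice $\{h \mid F_\mu(h) \geq p\}$ is an upward-closed set containing its infimum, hence equal to $[q_\mu(p), \infty)$. Intersecting these half-lines yields
\[
A \;=\; \bigcap_{\mu \in \mathcal{A}} [q_\mu(p), \infty) \;=\; \Bigl[\sup_{\mu \in \mathcal{A}} q_\mu(p),\, \infty\Bigr),
\]
and taking the infimum of both sides gives $\inf A = \sup_{\mu \in \mathcal{A}} q_\mu(p) = \overline{Q}_\mathcal{A}(p)$, as required.

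\textbf{Where the care is needed.} The only nontrivial point is that the set $\{h \mid F_\mu(h) \geq p\}$ is actually closed, i.e.\ that its infimum $q_\mu(p)$ is attained. This is exactly the content of right-continuity of $F_\mu$ and is what lets us write it as $[q_\mu(p), \infty)$ rather than $(q_\mu(p), \infty)$. Once that is in hand, the intersection of arbitrarily many half-lines $[q_\mu(p), \infty)$ is $[\sup_\mu q_\mu(p), \infty)$ by definition of supremum, and no measure-theoretic or compactness argument is needed. Since the authors themselves label the proof ``obvious,'' I expect the entire argument to fit in a few lines once the half-line reformulation is made explicit.
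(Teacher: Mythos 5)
Your proof is correct and is essentially the paper's argument in cleaner packaging: the paper's two inequalities ($b\geq a$ and $b\leq a$, obtained by chains of implications from the definitions of $\sup$ and $\inf$) are exactly the two containments of your set identity $A=\bigl[\sup_{\mu}q_\mu(p),\infty\bigr)$. Your explicit appeal to right-continuity to close the half-line is a nice touch the paper glosses over, though it is not strictly needed to compute $\inf A$.
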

\begin{figure}[htb]
    \centering
     \begin{tikzpicture}[xscale=2.5, yscale=3.5]
        \draw[->] (0,0) -- (3.2,0);
        \draw[->] (0,0) -- (0,1.05);
        \node[below left] at (0,0) {$0$};
        \node[left] at (0,1) {$1$};
        \node[left] at (0,0.4) {$F_\mu (h)$};
        \node[below right] at (3.2,0) {$h$};
        \draw[domain=0:1.92, color=orange, samples=200] plot(\x,{min(0.829*sqrt(sqrt(\x)) - 0.05*sin(deg(3*\x)),1)});
        \draw[color=orange] (1.92,1) -- (3.2,1);
        \draw[domain=0:3.2, color=blue, samples=100] plot(\x,{min(0.7*ln(\x+1)+0.05*sin(deg(3*\x)),1)});
        \draw[domain=0:3.2, color=ForestGreen] plot(\x,{0.04*exp(\x)-0.04});
        \draw[dashed] (0,0.75) -- (2.98,0.75);
        \draw[dashed] (0.8,0) -- (0.8,0.75);		
        \draw[dashed] (1.98,0) -- (1.98,0.75);
        \draw[dashed] (2.98,0) -- (2.98,0.75);
        \node[left] at (0,0.75) {$p$};
        \node[below, color=orange] at (0.8,0) {$x_2$};
        \node[below, color=blue] at (1.98,0) {$x_1$};
        \node[below, color=ForestGreen] at (2.98,0) {$x_{max}$};
        \draw (2.6,0.2) node[below] {$\inf F_{\mu}(h)$} to[out=100, in=-10] (2.385,0.365);
    \end{tikzpicture}
    \caption{Illustration of the duality result \ref{THM : DUALITY THEOREM}. The orange and the green curves represent the CDF envelope; we can see that the maximum quantile $x_{max}$ is actually the quantile of the lowest CDF.}
	\label{fig : Duality transformation}
\end{figure}
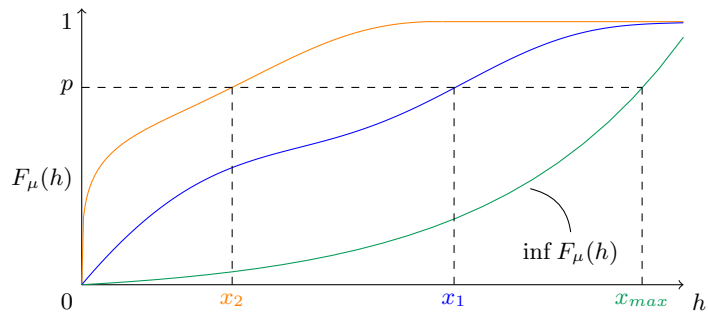

Proposition \ref{THM : DUALITY THEOREM} shows that computing the lowest CDF will easily provide the maximum quantile. Our problem is therefore to evaluate the lowest probability of failure $\inf_{\mu \in \mathcal{A}} P_{\mu}(G(X)\leq h)$ for a fixed threshold $h$. Under this form the OUQ reduction theorem applies (see \cite{owhadi_optimal_2013}, \cite{winkler_extreme_1988}). It states that the optimal solution of our optimization problem is a product of discrete measures. The most general form of the theorem reads as follows:

\begin{theorem}[OUQ reduction {\citet[p.37]{owhadi_optimal_2013}}]
	Suppose that $\mathcal{X} := \mathcal{X}_1 \times \dots \times \mathcal{X}_p$ is a product of Radon spaces. Let 
	\[\mathcal{A} := \left\lbrace (G,\mu)\ \begin{array}{|l}
		G:\mathcal{X}\rightarrow \mathcal{Y} \text{, is a real valued measurable function}, \\
		\mu = \mu_1 \otimes \dots \otimes \mu_p \in \bigotimes_{i=1}^p \mathcal{M}_1(\mathcal{X}_i) \ ,\\
		\text{for each G, and for some measurable functions }\\ \varphi_l:\mathcal{X}\rightarrow \mathbb{R} \text{ and } \varphi_j^{(i)}:\mathcal{X}_i \rightarrow \mathbb{R}\ , \\
		\qquad \qquad \bullet\ \mathbb{E}_{\mu} [\varphi_l] \leq 0 \text{ for } l=1,\dots,N_0\ ,  \\
		\qquad \qquad \bullet\ \mathbb{E}_{\mu_i}[\varphi_j^{(i)}] \leq 0 \text{ for } j=1, \dots, N_i \text{ and } i=1,\dots, p 
	\end{array}\right\rbrace\]
	Let $\Delta_n(\mathcal{X})$ be the set of all discrete measure supported on at most $n+1$ points of $\mathcal{X}$, and
	\[\mathcal{A}_{\Delta} := \left\lbrace (G,\mu) \in \mathcal{A} \ |\ \mu_i \in \Delta_{N_0+N_i}(\mathcal{X}_i) \right\rbrace \ .\]
	Let $q$ be a measurable real function on $\mathcal{X}\times \mathcal{Y}$. Then
	\[\displaystyle \sup_{(G,\mu)\in\mathcal{A}} \mathbb{E}_{\mu}[q(X,G(X))] = \sup_{(G,\mu)\in\mathcal{A}_{\Delta}} \mathbb{E}_{\mu}[q(X,G(X))]\ .\]
	\label{th : OUQ reduction theorem}
\end{theorem}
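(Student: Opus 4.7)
The plan is to carry out the reduction one marginal at a time. Fix $G$ and the marginals $\mu_k$ for all $k\neq i$, and regard $\mu_i$ as the only unknown. By Fubini's theorem each global constraint can be written as
\[ \mathbb{E}_\mu[\varphi_l] \;=\; \int_{\mathcal{X}_i} \tilde\varphi_l(x_i)\,d\mu_i(x_i), \qquad \tilde\varphi_l(x_i) := \int \varphi_l(x)\,\prod_{k\neq i} d\mu_k(x_k), \]
and the objective $\mathbb{E}_\mu[q(X,G(X))]$ likewise becomes a linear (in fact continuous affine) functional of $\mu_i$. The problem thus reduces to maximizing a linear functional of $\mu_i\in\mathcal{M}_1(\mathcal{X}_i)$ subject to $N_0+N_i$ linear moment inequalities plus the normalization $\mu_i(\mathcal{X}_i)=1$.

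Introduce the convex subset $\mathcal{K}_i\subset\mathcal{M}_1(\mathcal{X}_i)$ consisting of probability measures $\nu$ that match $\mu_i$ on all of the relevant moment functionals, i.e.
\[ \int \tilde\varphi_l\,d\nu = \int \tilde\varphi_l\,d\mu_i \quad (l=1,\dots,N_0), \qquad \int \varphi_j^{(i)}\,d\nu = \int \varphi_j^{(i)}\,d\mu_i \quad (j=1,\dots,N_i). \]
The conceptual heart of the argument is the Rogosinski--Winkler--Richter extreme point theorem (see \citep{winkler_extreme_1988}): the extreme points of $\mathcal{K}_i$ are atomic measures with at most $N_0+N_i+1$ atoms, i.e. elements of $\Delta_{N_0+N_i}(\mathcal{X}_i)$. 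Intuitively, $\mathcal{K}_i$ is cut out by $N_0+N_i$ linear equalities inside the simplex of probability measures, so its extreme rays are supported on at most that many plus one point.

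Because $\mathcal{X}_i$ is a Radon space, Choquet's representation theorem writes $\mu_i$ as the barycenter of a probability measure on $\text{ext}(\mathcal{K}_i)$. Linearity of the objective in $\mu_i$ then forces the existence of an extreme point $\nu_i^\star\in\Delta_{N_0+N_i}(\mathcal{X}_i)$ achieving a value at least as large as the one attained at $\mu_i$. Since $\nu_i^\star\in\mathcal{K}_i$, replacing $\mu_i$ by $\nu_i^\star$ preserves every moment integral involved in the constraints (so all inequalities $\leq 0$ remain satisfied) and does not decrease the objective. Iterating this substitution for $i=1,\dots,p$ produces a product of discrete measures $\mu^\star\in\mathcal{A}_\Delta$ with $\mathbb{E}_{\mu^\star}[q(X,G(X))]\ge \mathbb{E}_\mu[q(X,G(X))]$. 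Taking suprema on both sides yields $\sup_{\mathcal{A}}\le\sup_{\mathcal{A}_\Delta}$, while the reverse inequality is immediate from $\mathcal{A}_\Delta\subset\mathcal{A}$.

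The step I expect to be the main obstacle is ensuring that the two analytic ingredients, Fubini on $\tilde\varphi_l$ and Choquet on $\mathcal{K}_i$, apply in the generality stated: measurability of the partial integrals requires the product structure on Radon spaces, and the Choquet representation requires $\mathcal{K}_i$ to be a weak-$*$ closed convex subset of a suitable locally convex space. The Radon assumption on each $\mathcal{X}_i$ is precisely what guarantees both of these. The extreme-point characterization itself is classical; once it and the standard disintegration machinery are in hand, the iteration across marginals is essentially bookkeeping.
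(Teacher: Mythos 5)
Your proposal is correct in outline and follows essentially the same route the paper indicates: the paper does not prove Theorem~\ref{th : OUQ reduction theorem} itself but cites \citet{owhadi_optimal_2013} and describes the argument as ``a recursive argument using Winkler's classification on every set $\mathcal{X}_i$,'' which is exactly your coordinate-by-coordinate reduction via Fubini, the Winkler/Richter extreme-point characterization of the moment set $\mathcal{K}_i$, and the barycentric representation. You also correctly locate the only real technical burden (the integral representation over the extreme points of a possibly non-compact moment set, which is where the Radon hypothesis and Winkler's machinery are actually needed), so nothing essential is missing relative to the paper's treatment.
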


The heuristic of Theorem \ref{th : OUQ reduction theorem} is that if you have $N_k$ pieces of information relevant to the random variable $X_k$ then it is enough to pretend that $X_k$ takes at most $N_k + 1$ values in $\mathcal{X}_k$. This important point comes from \cite{winkler_extreme_1988}, who has shown that the extreme measures of moment class $\left\lbrace \mu \in \mathcal{M}_1(\mathcal{X}) \ | \ \mathbb{E}_\mu [\varphi_1] \leq 0 , \dots , \mathbb{E}_\mu [\varphi_n] \leq 0 \right\rbrace$ are the discrete measures that are supported on at most $n+1$ points. The work of Winkler is based on the Choquet theory \citep{choquet_lectures_2018} which requires the set to be convex, as moment classes. The strength of Theorem \ref{th : OUQ reduction theorem} is that it extends the result to a tensorial product of moment sets. The proof relies on a recursive argument using Winkler's classification on every set $\mathcal{X}_i$. We recall that the product of spaces traduces the independence of the input variables. Another remarkable fact is that the theorem remains true whatever the function $G$ and the quantity of interest $q$ are. In fact it is only required that the quantity to be optimized is an affine function of the underlying measure $\mu$.

To be more specific, Theorem \ref{th : OUQ reduction theorem} states that the solution to our optimization problem is located on the $p$-fold product of finite convex combinations of Dirac masses.
\[\mathcal{A}_\Delta  = \left\lbrace \mu\in\mathcal{A} \; | \; \mu_i = \sum_{k=1}^{N_i+1} w_{k}^{(i)} \delta_{x_{k}^{(i)}}\ \text{ for }\  1\leq i\leq p\right\rbrace\ ,\]
such that $\overline{Q}_\mathcal{A}(p)=\overline{Q}_{\mathcal{A}_\Delta}(p)$.

Theorem \ref{th : OUQ reduction theorem} drastically simplifies the computational implementation to solve the optimization problem. Indeed, our optimization problem could be rewritten as
\begin{eqnarray}
\inf_{\mu \in \mathcal{A}_\Delta} F_{\mu}(h)\ & = & \inf_{\mu \in \mathcal{A}_\Delta} P_{\mu} (G(X)\leq h) \ , \nonumber \\ & = & \inf_{\mu \in \mathcal{A}_\Delta} \sum_{i_1=1}^{N_1+1} \dots \sum_{i_p=1}^{N_p+1} \omega_{i_1}^{(1)} \dots \omega_{i_p}^{(p)}\ \mathbbm{1}_{\{G(x_{i_1}^{(1)}, \dots, x_{i_p}^{(p)}) \leq h\}} \ ,
\label{eq : Probability of faillure sum of weights}
\end{eqnarray}
thus the weights and positions of the input distributions provide a natural parameterization of the optimization problem. 

\subsection{Limitations}
The reason behind the restriction to equality constraint on the inputs \eqref{eq : Optimization set}, is that it greatly simplify the parameterization of the optimization problem. Suppose that we enforce $N_i$ constraints on the moments of a scalar measure $\mu_i = \sum \omega_j^{(i)} \delta_{x_j^{(i)}}$, Theorem \ref{th : OUQ reduction theorem} states that the solution to our problem is supported by at most $N_i+1$ points. A noticeable fact is that as soon as the $N_i+1$ support points of the distribution are set, the corresponding weights are uniquely determined. Indeed, the $N_i$ constraints lead to $N_i$ equations, and one last equation derives from the measure mass equals to 1. The following $N_i+1$ linear equations holds
\begin{equation}
\left\lbrace \begin{array}{lclcll}
\omega_1^{(i)} & + & \dots & + & \omega_{N_i+1}^{(i)} & = 1 \\
\omega_1 x_1^{(i)} & + & \dots & + & \omega_{N_i+1}^{(i)} x_{N_i+1}^{(i)} & = c_1^{(i)} \\
\multicolumn{1}{c}{\vdots}  & & & & \multicolumn{1}{c}{\vdots} & \multicolumn{1}{r}{\vdots \; \;}\\
\omega_1^{(i)} {x_1^{(i)}}^{N_i} & + & \dots & + & \omega_{N_i+1}^{(i)} {x_{N_i+1}^{(i)}}^{N_i} & = c_{N_i}^{(i)} 
\end{array} \right.
\label{eq : Vandermonde weight system}
\end{equation} 
The determinant of the previous system is a Vandermonde matrix. Hence, the system is invertible as long as the $(x_j^{(i)})_j$ are distinct. The optimization problem can therefore be parameterized only by the support points of the inputs. 

However, in order to proceed to the optimization problem presented in Equation \eqref{eq : Probability of faillure sum of weights}, one must be able to generate positions points that correspond to the support of an admissible measure. That is, a measure that respects the constraints enforced in Equation \eqref{eq : Vandermonde weight system}, with the additional condition $0 \leq \omega_j^{(i)} \leq 1$. 

We define the moment space $M:=M(a,b)=\left\lbrace \mathbf{c}(\mu) \ | \ \mu\in\mathcal{M}_1([a,b])\right\rbrace$ where $\mathbf{c}(\mu)$ denote the sequence of moments of some measure $\mu$. The $n$th moment space $M_n$ is defined by projecting $M$ onto its first $n$ coordinates, $M_n = \{ \mathbf{c}_n(\mu) = (c_1,\dots, c_n)\; | \; \mu\in\mathcal{M}_1([a,b])\}$. $M_2$ is depicted Figure \ref{fig : Admissible set canonical moments}. In order to perform a numerical optimization using evolutionary or simulated annealing solver, the problem is reformulated as follow: given a moment sequence $\mathbf{c}_{n} = (c_1,\dots, c_{n})\in M_{n}$, one must be able to explore the whole set of admissible measures that has been parameterized with the points of their finite supports. Therefore, one shall generate at most $n+1$ support points of some discrete measure, having $\mathbf{c}_n$ as moment sequences. Canonical moments provide a surprisingly well tailored solution of this problem.

The work on canonical moments was first introduced by \cite{skibinsky_range_1967}. His main contribution covered the original study of the geometric aspect of general moment space \citep{skibinsky_maximum_1977}, \citep{skibinsky_principal_1986}. In a number of further papers, Skibinsky proves numerous other interesting properties of the canonical moments.  \cite{dette_theory_1997} have shown the intrinsic relation between a measure $\mu$ and its canonical moments. They highlight the interest of canonical moments in many areas of statistics, probability and analysis such as problem of design of experiments, or the Hausdorff moment problem \citep{hausdorff_momentprobleme_1923}. In the next section, we briefly introduce the theory of canonical moments, widely using results of \cite{dette_theory_1997}, so the interested readers are referred to this book.

\section{Theory and methodology}
\label{sec: Theory and methodology}
\subsection{Canonical moments on an interval}
We first define the extreme values, 
\begin{align*}
    & c_{n+1}^{+} = \max \left\lbrace c\in \mathbb{R} : (c_1,\dots, c_n,c) \in M_{n+1} \right\rbrace \ ,\\
    & c_{n+1}^{-} = \min \left\lbrace c\in \mathbb{R} : (c_1,\dots, c_n,c) \in M_{n+1} \right\rbrace \ ,
\end{align*}
which represent the maximum and minimum values of the $(n+1)$th moment that a measure can have, when its moments up to order $n$ equal to $\mathbf{c}_n$ . The $n$th canonical moment is then defined recursively as 
\begin{equation}
    p_n=p_n(\mathbf{c})=\frac{c_n-c_n^{-}}{c_n^{+}-c_n^{-}}\ .
\end{equation}
Note that the canonical moments are defined up to the degree $N = N(\mathbf{c}) = \min \left\lbrace \right. n \in \mathbb{N} \ |\ \mathbf{c}_n  \in \partial  M_n \left. \right\rbrace $, and $p_N$ is either $0$ or $1$. Indeed, we know from \citep[Theorem 1.2.5]{dette_theory_1997} that $\mathbf{c}_n \in \partial M_n$ implies that the underlying $\mu$ is uniquely determined, so that, $c_n^+ = c_n^-$. We also introduce the quantity $\zeta_n = (1-p_{n-1})p_n$ that will be of some importance in the following. The very nice properties of canonical moments is that they belong to $[0,1]$ and are invariant by any affine transformation of the support of the underlying measures. Hence, we may restrict ourselves to the case $a=0$, $b=1$.
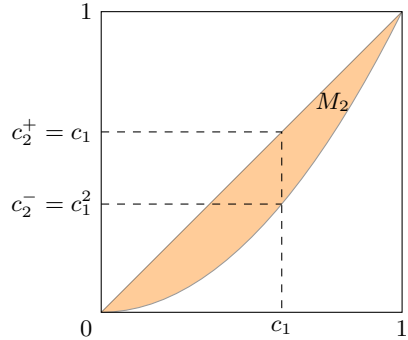
\begin{figure}
    \centering        
    \begin{tikzpicture}[scale=4]
        \draw (0,0) rectangle (1,1);
        \draw[domain=0:1, fill=orange, opacity=0.4] plot(\x,{\x*\x});
        \draw[opacity=0.4] (0,0) -- (1,1);
        \node[left] at (0,1) {$1$};
        \node[below left] at (0,0) {$0$};
        \node[below] at (1,0) {$1$};
        \node at (0.77,0.7) {$M_2$};
        %\node[right] at (0.65,0.4) {$\lb c_2(x)\ | \ 0\leq x\leq 1\rb$};
        \node[below] at (0.6,0) {$c_1$};
        \node[left] at (0,0.6) {$c_2^+ = c_1$};
        \node[left] at (0,0.36) {$c_2^- = c_1^2$};
        \draw[dashed] (0,0.6) -- (0.6,0.6) -- (0.6,0);
        \draw[dashed] (0,0.36) -- (0.6,0.36);
    \end{tikzpicture}
    \caption{The moment set $M_2$ and definition of $c_2^+$ and $c_2^-$ for $(a,b)=(0,1)$.}
	\label{fig:moment set M2}
\end{figure}

\subsection{Stieltjes Transform and Canonical Moments}
We introduce the Stieltjes Transform, which connects canonical moments to the support of a discrete measure. Then Stieltjes transform of $\mu$ is definides as 
\[S(z)=S(z,\mu) = \int_a^b \frac{d\mu (x)}{z-x}\ , \quad (z\in\mathbb{C}\backslash\{\text{supp}(\mu)\}) \ .\]
The transform $S(z, \mu)$ is an analytic function of $z$ in $\mathbb{C} \backslash \text{supp}(\mu)$. If $\mu$ has a finite support then 
\[S(z)= \int_a^b \frac{d\mu (x)}{z-x}= \sum_{i=1}^n \frac{\omega_i}{z-x_i}\ , \]
where the support points of the measure $\mu$ are distinct and denoted by $x_1,\allowbreak \dots, x_n$, with corresponding weights $\omega_1,\dots, \omega_n$. Alternatively, the weights are given by $\omega_i = \lim_{z \rightarrow x_i} (z-x_i)S(z)$. We can rewrite the transform as a ratio of two polynomials with no common zeros. The zeros of the denominator determine the support of $\mu$. 
\begin{equation}
S(z) = \frac{Q_{n-1}(z)}{P_n^*(z)} \ ,
\label{eq : Stieljes transform Q/P}
\end{equation} 
where $P_n^*(z)= \prod_{i=1}^n (z - x_i)$ and 
\[ \omega_i = \frac{Q_{n-1}(x_i)}{\frac{d}{dx} P_n^*(x) |_{x=x_i}}\ . \]
First, we introduce some basic property of continued fractions
\begin{lemma}
A finite continued fraction is an expression of the form 
\[ b_0+\frac{a_1}{b_1+\frac{a_2}{b_2+ \dots}} = b_0 + \cfrac{a_1}{b_1} + \cfrac{a_2}{b_2} + \dots + \cfrac{a_n}{b_n} = \frac{A_n}{B_n} \ .\]
The quantities $A_n$ and $B_n$ are called the $n$th partial numerator and denominator. There are basic recursive relations for the quantities $A_n$ and $B_n$ given by
\begin{eqnarray*}
A_n & = & b_n A_{n-1} + a_n A_{n-2} \ , \\
B_n & = & b_n B_{n-1} + a_n B_{n-2} \ ,
\end{eqnarray*}
for $n \geq 1$ with initial conditions 
\begin{eqnarray*}
A_{-1}=1 & \quad , \quad & A_0=b_0 \ , \\
B_{-1}=0 & \quad , \quad & B_0=1 \ .
\end{eqnarray*}
\label{Lem : Continued Fraction}
\end{lemma}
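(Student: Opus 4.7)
The plan is to prove the statement by induction on $n$, which is the standard route for this classical identity. The base cases are $n=0$ and $n=1$: for $n=0$ the formula gives $A_0/B_0 = b_0/1 = b_0$, matching the empty continued fraction; for $n=1$ the recurrences yield $A_1 = b_1 b_0 + a_1$ and $B_1 = b_1$, so $A_1/B_1 = b_0 + a_1/b_1$, again matching. These checks pin down the initial conditions and confirm that the two-step recursion has been correctly initialized at indices $-1$ and $0$.

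For the inductive step, the key observation is that the depth-$n$ continued fraction can be obtained from the depth-$(n-1)$ one by the substitution $b_{n-1} \longmapsto b_{n-1} + \tfrac{a_n}{b_n}$. To exploit this, I would phrase the induction hypothesis for an arbitrary sequence of partial numerators and denominators, so that the last entry $b_{n-1}$ is allowed to be any real quantity. Writing $A'_{n-1}$ and $B'_{n-1}$ for the numerator and denominator of the modified fraction, and using that $A_{n-2}, A_{n-3}, B_{n-2}, B_{n-3}$ do not depend on $b_{n-1}$, the recurrence of the induction hypothesis gives
\[ A'_{n-1} = \Bigl(b_{n-1} + \tfrac{a_n}{b_n}\Bigr) A_{n-2} + a_{n-1} A_{n-3} = A_{n-1} + \tfrac{a_n}{b_n} A_{n-2} = \tfrac{1}{b_n}\bigl(b_n A_{n-1} + a_n A_{n-2}\bigr), \]
and analogously $B'_{n-1} = \tfrac{1}{b_n}(b_n B_{n-1} + a_n B_{n-2})$. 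Taking the ratio, the factors $1/b_n$ cancel, yielding $A'_{n-1}/B'_{n-1} = A_n/B_n$, which closes the induction.

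The only genuine subtlety, and therefore the main thing to be careful about, is the formulation of the induction hypothesis: it must apply to continued fractions whose final partial denominator is a general real number rather than an element of the fixed sequence $(b_k)$, so that the substitution $b_{n-1} \mapsto b_{n-1} + a_n/b_n$ is legitimate. Once this is granted, the argument is purely algebraic and amounts to the short computation above. No further machinery (analyticity, convergence, etc.) is needed since the lemma concerns a finite continued fraction, and implicit nondegeneracy (all partial denominators nonzero at each intermediate step) is assumed throughout.
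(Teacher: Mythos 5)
Your induction argument is correct: the base cases match the stated initial conditions, and the key step --- viewing the depth-$n$ fraction as the depth-$(n-1)$ fraction with $b_{n-1}$ replaced by $b_{n-1}+a_n/b_n$, applied to an induction hypothesis stated for an arbitrary final partial denominator --- is exactly the standard proof of this classical identity. Note that the paper itself gives no proof of this lemma; it is quoted as a known fact from the continued-fraction literature, so your argument simply supplies the standard missing details, and the only caveat you flag (nonvanishing of the intermediate denominators so that the fractions are defined) is the right one.
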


The case where the canonical moments are given is important, indeed we have the following result 

\begin{theorem}[{\citet[Theorem 3.3.1]{dette_theory_1997}}]
Let $\mu$ be a probability measure on the interval $[a,b]$ and $z\in \mathbb{C} \backslash [a,b]$, then the Stieltjes transform of $\mu$ has the continued fraction expansion 
\begin{align*}
S(z) & = \cfrac{1}{z-a} \ -\  \cfrac{\zeta_1 (b-a)}{1} \ -\  \cfrac{\zeta_2 (b-a)}{z-a} \ -\  \dots \label{eq : Stieljes transform}\\
& = \cfrac{1}{z-a- \zeta_1 (b-a)} \ -\  \cfrac{\zeta_1 \zeta_2 (b-a)^2}{z-a - (\zeta_2+ \zeta_3)(b-a)} \\ &  -\  \cfrac{\zeta_3 \zeta_4 (b-a)^2}{z-a- (\zeta_4 +\zeta_5) (b-a)} \ -\  \dots \nonumber
\end{align*} 
\label{Th : Stieljes transform}
Where we recall that $\zeta_n := p_{n-1}(1-p_n)$.
\end{theorem}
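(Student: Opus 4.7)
My approach is to prove the result first for $[a,b]=[0,1]$ and then transport the expansion to a general interval by an affine change of variable. Since canonical moments are invariant under affine transformations of the support (as noted in the text), pushing $\mu$ forward by $x\mapsto(x-a)/(b-a)$ produces a measure $\tilde\mu$ on $[0,1]$ with the same sequence $(p_n)$, and a direct calculation gives $S(z,\mu) = (b-a)^{-1}\, S\!\bigl((z-a)/(b-a),\tilde\mu\bigr)$. Substituting $\tilde z = (z-a)/(b-a)$ into the $[0,1]$-version of the expansion then produces exactly the factors of $(b-a)$ and the shifts $z\mapsto z-a$ appearing in the theorem. So from now on I assume $a=0$, $b=1$.

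On $[0,1]$ I would first establish the Stieltjes-type (S-)fraction
\[
S(z) \;=\; \cfrac{1}{z} \;-\; \cfrac{\zeta_1}{1} \;-\; \cfrac{\zeta_2}{z} \;-\; \cfrac{\zeta_3}{1} \;-\; \cfrac{\zeta_4}{z} \;-\; \cdots
\]
which is the first displayed formula of the theorem for $(a,b)=(0,1)$. The construction is inductive from the elementary identity $\tfrac{1}{z-x}=\tfrac{1}{z}+\tfrac{x}{z(z-x)}$ inserted inside the integral defining $S$: this yields $S(z,\mu)=\tfrac{1}{z}+\tfrac{c_1}{z}\,S(z,\mu^{(1)})$, where $\mu^{(1)}$ is the size-biased probability measure $x\,d\mu/c_1$ on $[0,1]$. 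Applying the dual manipulation $\tfrac{x}{z-x}=-1+\tfrac{z}{z-x}$ to $S(z,\mu^{(1)})$ returns it to a form on which the first transformation can be reapplied, producing an alternating sequence of auxiliary probability measures $\mu^{(1)},\mu^{(2)},\dots$ Iterating the two substitutions generates a continued fraction whose successive partial numerators are rational in the extremal moments $c_k^{\pm}$; the defining relation $p_k=(c_k-c_k^-)/(c_k^+-c_k^-)$ then collapses them, step by step, to $\zeta_k=p_{k-1}(1-p_k)$.

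The second displayed formula (the J-fraction) follows from the S-fraction by the classical \emph{contraction} of two consecutive S-fraction steps into a single J-fraction step. This is a purely formal manipulation that is an immediate corollary of the recursions for the partial numerators $A_n$ and denominators $B_n$ stated in Lemma~\ref{Lem : Continued Fraction}: a short computation gives the contracted partial denominators $z-(\zeta_{2n}+\zeta_{2n+1})$ and partial numerators $-\zeta_{2n-1}\zeta_{2n}$, which after reinsertion of the $(b-a)$-factors match the statement. Convergence for $z\in\mathbb{C}\setminus[0,1]$ (and termination of the expansion when $\mu$ has finite support, i.e.\ $p_N\in\{0,1\}$ for some $N$) is handled separately by a standard estimate on the convergents $A_n/B_n$ coming from the Hausdorff moment problem.

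The main obstacle is the middle step: the iterative construction of the S-fraction is straightforward, but identifying its successive coefficients with the quantities $\zeta_k$ requires tracking precisely how canonical moments behave under the size-biasing $\mu\mapsto x\,d\mu/c_1$ and under its dual. This is the genuine combinatorial content of the theorem. Via the recursive formulas that express $c_n^\pm$ in terms of $p_1,\dots,p_{n-1}$, it reduces to an explicit (but not mechanical) algebraic identity between the continued-fraction coefficients produced by the iteration and the product $p_{k-1}(1-p_k)$. Once that identity is in place, the S-fraction is established and the rest of the proof — the contraction to the J-fraction and the transport back to $[a,b]$ — is formal.
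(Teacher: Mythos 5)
The paper does not actually prove this statement: it is imported verbatim from \cite{dette_theory_1997} (their Theorem 3.3.1), so there is no internal proof to compare yours against and your proposal must stand on its own. Its outer layers are sound: the reduction to $[0,1]$ via the affine pushforward and the identity $S(z,\mu)=(b-a)^{-1}S\bigl((z-a)/(b-a),\tilde\mu\bigr)$ is correct; the passage from the S-fraction to the J-fraction is indeed the classical even contraction and follows formally from the recursions of Lemma \ref{Lem : Continued Fraction}; and convergence on $\mathbb{C}\setminus[a,b]$ does follow from Markov-type estimates tied to the determinacy of the Hausdorff moment problem.

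The genuine gap is the step you yourself flag as ``the genuine combinatorial content of the theorem'': the identification of the successive S-fraction coefficients with the $\zeta_k$. Everything else in your outline is routine bookkeeping; this identification \emph{is} the theorem, and you assert it rather than prove it. The iterated size-biasing $\mu\mapsto x\,d\mu/c_1$ together with its dual does generate a continued fraction, but showing that its $k$th partial numerator equals $\zeta_k$ requires either the Hankel-determinant expressions for $c_k^{\pm}$ in terms of $p_1,\dots,p_{k-1}$ or the three-term recurrence for the monic orthogonal polynomials of $\mu$ (whose coefficients $\zeta_{2k}+\zeta_{2k+1}$ and $\zeta_{2k-1}\zeta_{2k}$ are exactly what the J-fraction encodes, cf.\ Equation \eqref{eq : Recursive formula polynomial}); neither is supplied, and the claim that the rational expressions ``collapse, step by step'' to $\zeta_k$ is precisely what needs proof. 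A symptom of the gap: you write $\zeta_k=p_{k-1}(1-p_k)$, copying the restatement appended to the theorem, but that formula gives $\zeta_1=p_0(1-p_1)=0$ and would collapse the entire expansion to $1/(z-a)$; the correct convention, stated earlier in Section \ref{sec: Theory and methodology} and needed for the first convergent to reproduce $c_1$, is $\zeta_n=(1-p_{n-1})p_n$ with $p_0=0$, so that $\zeta_1=p_1$. Had the coefficient computation actually been carried out, this discrepancy would have surfaced.
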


Theorem \ref{Th : Stieljes transform} states that the Stieltjes transform can be computed when one knows the canonicals moments. It follows from Equation (\ref{eq : Stieljes transform Q/P}) and Lemma \ref{Lem : Continued Fraction} that we have the following recursive formula for $P_n^*$
\begin{equation}
 P_{k+1}^*(x)=(x-a-(b-a)(\zeta_{2k}+\zeta_{2k+1}))P_{k}^*(x)-(b-a)^2 \zeta_{2k-1} \zeta_{2k} P_{k-1}^* \ ,
\label{eq : Recursive formula polynomial}
\end{equation}
where $P_{-1}^*=0$, $P_0^*=1$. The support of $\mu$ is thus the roots of $P_n^*$. This obviously leads to the following theorem.

\begin{theorem}[{\citet[Theorem 3.6.1]{dette_theory_1997}}]
Let $\mu$ denote a measure on the interval $[a, b]$ supported on $n$ points with canonical moments $p_1 , p_2 , \dots$. Then, the support of $\mu$ is the set of $\{x:P_n^*(x)=0\}$
\label{th : Dette Generation of measure}
\end{theorem}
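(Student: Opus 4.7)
The plan is to identify the polynomial $P_n^*$ defined by the recurrence \eqref{eq : Recursive formula polynomial} with the denominator $\prod_{i=1}^n(z-x_i)$ appearing in the rational-function form \eqref{eq : Stieljes transform Q/P} of the Stieltjes transform. Both will play the role of the $n$-th partial denominator of the continued fraction of Theorem~\ref{Th : Stieljes transform}, so matching them pins down the support.

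First, because $\mu$ is supported on the $n$ distinct points $x_1,\dots,x_n$ with strictly positive weights, the Stieltjes transform is the rational function
\[
S(z) = \sum_{i=1}^n \frac{\omega_i}{z-x_i} = \frac{Q_{n-1}(z)}{\prod_{i=1}^n(z-x_i)},
\]
whose numerator and denominator are coprime: indeed, $Q_{n-1}(x_i)=\omega_i\prod_{j\neq i}(x_i-x_j)\neq 0$ by the partial fraction formula recalled in the excerpt. I would then apply Lemma~\ref{Lem : Continued Fraction} to the contracted continued fraction of Theorem~\ref{Th : Stieljes transform}: reading off $b_0=0$, $a_1=1$, $b_1=z-a-\zeta_1(b-a)$, and for $k\ge 2$,
\[
a_k = -\zeta_{2k-3}\zeta_{2k-2}(b-a)^2,\qquad b_k = z-a-(\zeta_{2k-2}+\zeta_{2k-1})(b-a),
\]
the three-term recurrence $B_k = b_kB_{k-1} + a_k B_{k-2}$ coincides (after the index shift $k\mapsto k+1$) with the recurrence \eqref{eq : Recursive formula polynomial}, and the initial data $B_{-1}=0$, $B_0=1$ match $P_{-1}^*=0$, $P_0^*=1$. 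Hence $B_k = P_k^*$ for every $k$, and in particular $P_n^*$ is a monic polynomial of degree $n$ in $z$.

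The key step is to show that the continued fraction terminates at level $n$, so that the $n$-th convergent $A_n/B_n$ equals $S(z)$ exactly. Because $\mu$ has exactly $n$ atoms, the index at which the moment vector reaches $\partial M_N$ is $N=2n$; the boundary canonical moment $p_{2n}\in\{0,1\}$ then makes the corresponding $\zeta$ coefficient vanish, and all deeper levels of the continued fraction collapse. Equating $A_n/B_n = Q_{n-1}(z)/\prod_{i=1}^n(z-x_i)$ and clearing denominators gives
\[
A_n(z)\prod_{i=1}^n(z-x_i) = B_n(z)\,Q_{n-1}(z).
\]
The coprimality established above forces $\prod_{i=1}^n(z-x_i)$ to divide $B_n(z)$, and equality of monic polynomials of the same degree $n$ yields $P_n^*(z)=\prod_{i=1}^n(z-x_i)$, so the roots of $P_n^*$ are exactly the support points.

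The main obstacle I expect is the careful justification of the termination step: one must verify that a boundary canonical moment propagates through the recurrence so as to annihilate the tail of the continued fraction without also collapsing the degree of $B_n$ below $n$, and one must check that the partial numerator $A_n$ remains nonzero so that $A_n/B_n$ genuinely represents $S(z)$. This is the delicate boundary behaviour of canonical moments at $\partial M_N$ developed systematically in \citet{dette_theory_1997}, and it is precisely what allows the purely algebraic identification of $P_n^*$ with $\prod_{i=1}^n(z-x_i)$ to go through.
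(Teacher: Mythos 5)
Your proof follows essentially the same route as the paper (and as Dette and Studden, from whom the paper cites this result without reproving it): match the three-term recurrence \eqref{eq : Recursive formula polynomial} for $P_n^*$ with the partial denominators of the continued fraction in Theorem \ref{Th : Stieljes transform} via Lemma \ref{Lem : Continued Fraction}, observe that the fraction terminates at level $n$ for an $n$-point measure, and identify the monic degree-$n$ polynomial $P_n^*$ with the denominator $\prod_{i=1}^n(z-x_i)$ of \eqref{eq : Stieljes transform Q/P} using coprimality of numerator and denominator. One small correction to the termination step: $N=2n$ holds only when no support point lies at $a$ or $b$ (for instance $\mu=\omega\delta_a+(1-\omega)\delta_b$ has $n=2$ atoms but $N=2$, not $4$); in general $N=2n-r$ where $r$ is the number of endpoint atoms, yet the boundary value of $p_N$ still forces $\zeta_{2n-1}\zeta_{2n}=0$ while $\zeta_{2k-1}\zeta_{2k}\neq 0$ for $k<n$, so the continued fraction terminates at level $n$ exactly as your argument requires.
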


In the following we consider a fixed sequence of moments $\mathbf{c}_n = (c_1, \dots, c_n) \in M_n$, let $\mu$ be a measure supported on at most $n+1$ points, such that its moments up to order $n$ coincide with $\mathbf{c}_n$. $\mathbf{c}_n$ is uniquely related to $\mathbf{p}_n=(p_1, \dots, p_n)$ the corresponding sequence of canonical moments, so that it is equivalent to constraint classical moments or canonical moments. Corollary \ref{Th : Generation of measure} is the moment version of Theorem \ref{th : Dette Generation of measure}. The only difficulty compared to Theorem \ref{th : Dette Generation of measure} is that one try to generate admissible measures supported on at most $n+1$ Dirac masses. Given a measure supported on less than $n+1$ points, the question is therefore to know whether it makes sense to evaluate the $n+1$ roots of $P_{n+1}^*$. A limit argument is used for
 the proof.

\begin{corollary}
    Consider a sequence of moment $\mathbf{c}_n = (c_1, \dots, c_n) \in M_n$, and the set of measure 
    \[ \mathcal{A}_\Delta = \left\lbrace \mu = \sum_{i=1}^{n+1} \omega_i \delta_{x_i} \in \mathcal{M}_1([a,b]) \ | \ \mathbbm{E}_{\mu}(x^j) = c_j, \ j=1,\dots,n \right\rbrace \ .\]
    We define 
    \[ \Gamma=\left\lbrace (p_{n+1}, \dots, p_{2n+1})\in [0,1]^{n+1} \ | \ p_i\in \{0,1\} \Rightarrow p_k=0 ,\ k > i \right\rbrace\ . \]
	Then there exists a bijection between $\mathcal{A}_{\Delta}$ and $\Gamma$.
\label{Th : Generation of measure}
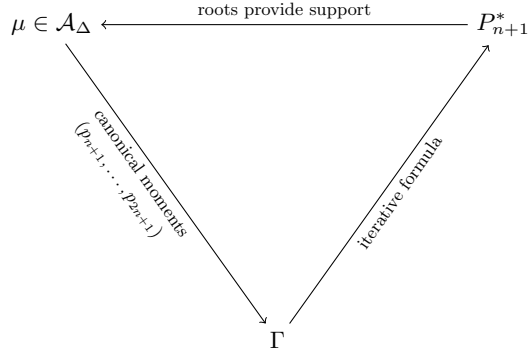
\begin{figure}[htb]
	\centering
    \begin{tikzpicture}[remember picture, scale=3]
		\node (A) at (-1,0) {$\mu\in\mathcal{A}_\Delta$};
		\node (B) at (1,0) {$P_{n+1}^*$};
		\node (C) at (0,-1.4) {$\Gamma$};
		\draw[->] (A) to node[below, rotate=-55, text width=3.5cm, scale=0.75]{canonical moments $(p_{n+1},\dots, p_{2n+1})$} (C) ;
		\draw[->] (C) to node[below, rotate=55, scale=0.75]{{iterative formula}} (B);
		\draw[->] (B) to node[above, scale=0.75]{{roots provide support}} (A);
	\end{tikzpicture}
	\caption{Relation between the set of admissible measures and the canonical moments.}
\end{figure}
\end{corollary}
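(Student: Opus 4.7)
The plan is to identify $\mathcal{A}_\Delta$ with $\Gamma$ via the canonical moment sequence. Since the map $\mathbf{c}_n\mapsto(p_1,\dots,p_n)$ is a bijection on the interior of $M_n$, fixing $\mathbf{c}_n$ is equivalent to fixing the first $n$ canonical moments, and all remaining freedom in a measure $\mu\in\mathcal{A}_\Delta$ is carried by its tail canonical moments $(p_{n+1},p_{n+2},\dots)$. The dimension count is encouraging: $\mu$ has $2(n+1)-1-n=n+1$ free parameters, which matches the $n+1$ coordinates of $\Gamma$.

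Forward direction. Given $\mu=\sum_{i=1}^{n+1}\omega_i\delta_{x_i}\in\mathcal{A}_\Delta$, since $\mu$ is supported on at most $n+1$ points its moment vector reaches $\partial M_N$ at some index $N\le 2n+1$; its canonical moments are defined up to $p_N\in\{0,1\}$. Pad the tail with zeros at indices $N+1,\dots,2n+1$ to obtain a tuple in $\Gamma$ by construction. Conversely, given $(p_{n+1},\dots,p_{2n+1})\in\Gamma$, concatenate with the fixed $(p_1,\dots,p_n)$ and apply the three-term recursion \eqref{eq : Recursive formula polynomial} to build $P_{n+1}^*$. When all $p_j$ lie in $(0,1)$, Theorem~\ref{th : Dette Generation of measure} produces $n+1$ distinct roots of $P_{n+1}^*$ inside $[a,b]$, and the weights follow as residues of the Stieltjes transform via Theorem~\ref{Th : Stieljes transform}, yielding a unique $\mu\in\mathcal{A}_\Delta$ whose first $n$ moments coincide with $\mathbf{c}_n$.

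The main obstacle is the degenerate case where some coordinate $p_i$, $i\in\{n+1,\dots,2n+1\}$, equals $0$ or $1$: formally the canonical sequence terminates at index $i$, so Theorem~\ref{th : Dette Generation of measure} does not directly apply to the full length-$(n+1)$ tuple, and the expected measure will be supported on strictly fewer than $n+1$ points. I would handle this by a continuity argument: approximate the boundary tuple by a sequence in $(0,1)^{n+1}\cap\Gamma$, note that the coefficients of $P_{n+1}^*$ produced by the recursion are polynomial in the $p_j$, and invoke continuity of roots and of Stieltjes residues with respect to these coefficients to pass to the limit. The limit measure has some atoms merging or some weights vanishing, but still admits a representation by at most $n+1$ Dirac masses, so it lies in $\mathcal{A}_\Delta$.

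Finally, I would check that the zero-padding convention defining $\Gamma$ is exactly what makes the limit in the boundary case unique and independent of the approximating path, thereby establishing that the forward and reverse constructions are mutual inverses. The fact that the extended canonical sequence always starts with the prescribed $(p_1,\dots,p_n)$ guarantees the resulting measure satisfies the moment constraints, and uniqueness of the canonical representation of a finitely supported measure (Theorem 1.2.5 in \citet{dette_theory_1997}, cited in the text) guarantees injectivity of the forward map.
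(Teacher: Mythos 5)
Your proposal is correct and follows essentially the same route as the paper: parameterize the residual freedom by the tail canonical moments $(p_{n+1},\dots,p_{2n+1})$, recover the support as the roots of $P_{n+1}^*$ built from the three-term recursion, and resolve the degenerate case (some $p_i\in\{0,1\}$, measure supported on fewer than $n+1$ points) by a limit argument in which the extra roots of $P_{n+1}^*$ acquire vanishing weight. The only cosmetic difference is that you justify the limit via continuity of polynomial roots and residues in the coefficients, whereas the paper invokes equicontinuity of the Stieltjes transform under weak convergence; both deliver the same conclusion.
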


\begin{proof}
Without loss of generality we can always assume $a=0$ and $b=1$ as the problem is invariant using affine transformation. We first consider the case where $\text{card}(\text{supp}(\mu))$ is exactly $n+1$. From Theorem \ref{th : Dette Generation of measure}, the polynomial $P_{n+1}^*$ is well defined with $n+1$ distinct roots corresponding to the support of $\mu$. Notices that this implies that $(p_1, \dots, p_{2n-1})$ belongs to $]0,1[^{2n-1}$ and that $p_{2n}, p_{2n+1}$ or $p_{2n+2}$ belongs to $\{ 0,1 \}$. 

Now, the functions $g(x,z) = 1/(z-x)$ are equicontinuous for $z$ in any compact region which has a positive distance from $[0,1]$. The Stieljes transform is a finite sum of equicontinuous functions and therefore also equicontinuous. Thus if a measure $\mu$ converges weakly to $\mu^*$, the convergence must be uniform in any compact set with positive distance from $[0,1]$ (see \cite{royden_real_1968}). It is then always possible to restrict ourselves to measures of cardinal $m < n+1$, by letting $p_k$ converge to $0$ or $1$ for $2m-2\leq k \leq 2m $. Note that by doing so the polynomials $P_m^*$ and $P_{n+1}^*$ will have the same roots. But, $P_{n+1}^*$ and $Q_{n-1}$ will have some others roots of multiplicity strictly equals (see Equation (\ref{eq : Stieljes transform Q/P}) and (\ref{eq : Recursive formula polynomial})). The corresponding weights of this roots are vanishing, so that the measures extracted from $P_m^*$ and $P_{n+1}^*$ are the same. 
\end{proof}
\begin{remark}
The first $n$ canonical moments $(p_1, \dots, p_n)$ of $\mu$ are fixed; the free parameters that allow to explore the whole set $\mathcal{A}_{\Delta}$ are the $n+1$ last canonical moments $(p_{n+1},\dots, p_{2n+1})$.
\end{remark}
\begin{remark}
From a computational point of view, as the proof relies on a limit argument, we can always generate $p_k \in ]0,1[\ ,\  \text{ for }  n+1\leq k \leq 2n+1 $. This prevents the condition $p_k \in \{ 0,1\} \Rightarrow p_j = 0$ for $j>k$. 
\end{remark}

\begin{example}
Let $\mu \in \mathcal{P}([0,1])$ be such that $c_1(\mu) = 0.5$ and $c_2(\mu) = 0.35$. The corresponding canonical moments are $p_1 = 0.5$ and $p_2 = 0.4$. We are looking for a measure supported on at most 3 points. Set arbitrarily, $p_3 = 0.2,\ p_4 = 10^{-5}$ and $p_5 = 0.4$\ . The quantity $p_4$ is close to zero, so that, the measure should be theoretically supported on 2 points. The roots of $P_2^*$ are $0.08121$ and $0.73878$ with weights $0.36312$ and $0.63688$, while the roots of $P_3^*$ are $0.08121, 0.73878$ and $0.4$ with corresponding weights $0.36132, 0.63686$ and $2\times10^{-5}$. The last point $0.4$ depends on the choice of $p_5$ but is almost non weighted, the measure converges to the two points distribution as $p_4 \rightarrow 0$.
\end{example}

\begin{figure}[htb]
	\centering
	\clipbox{0pt 0pt 0pt 20pt}{\includegraphics[scale = 0.40]{./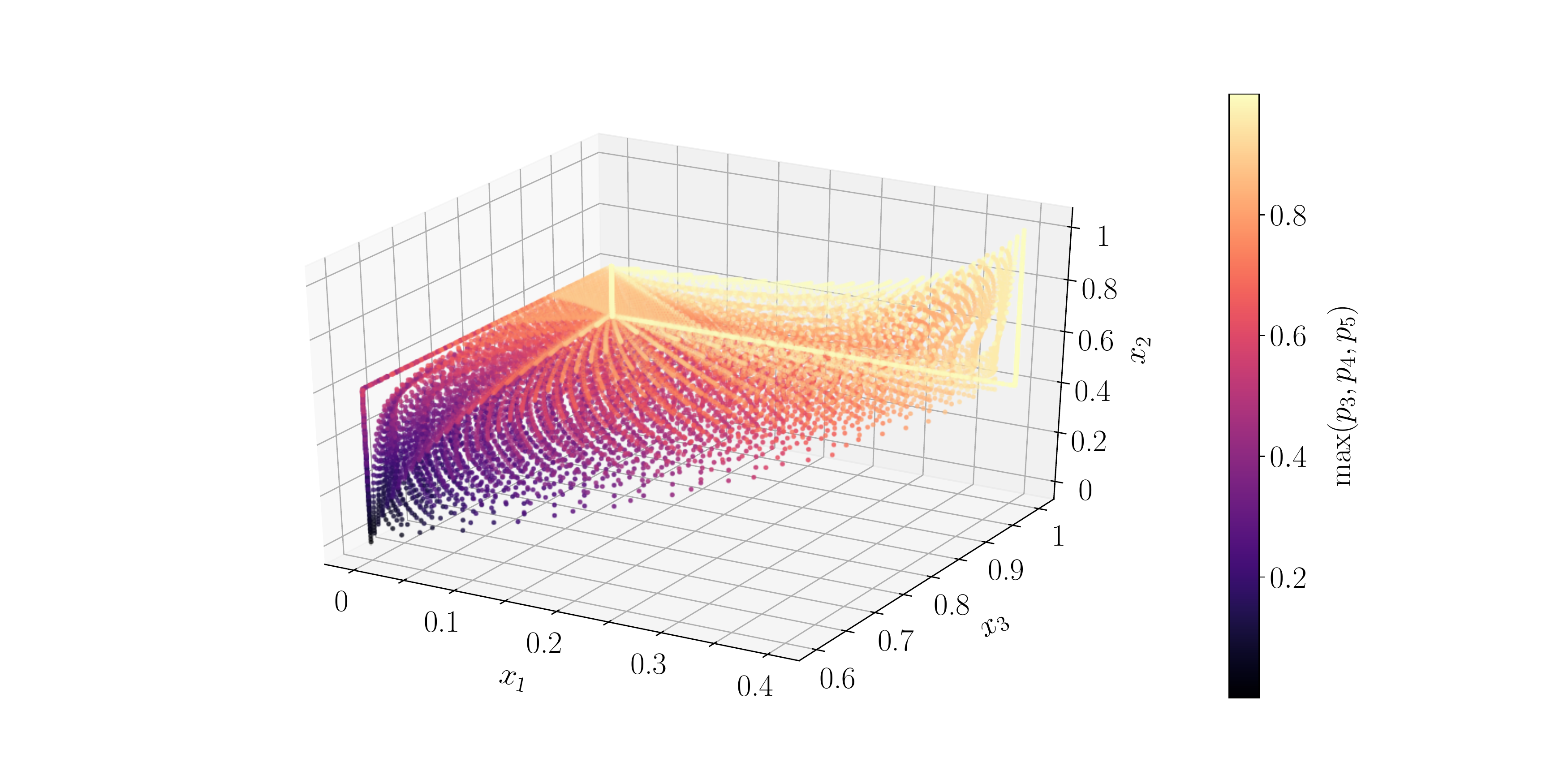}}
	\caption{Visualization of the support generated using a canonical parameterization. Each point coordinates correspond to the three support points (the roots of $P_3^*$) of a measure $\mu$ on $[0,1]$, whose two first moments are set to $c_1=0.5$ and $c_2=0.3$. We generate a regular grid where $p_3, p_4$ and $p_5$ goes from 0 to 1.}
	\label{fig : Admissible set canonical moments}
\end{figure}

\section{Algorithms}
\label{sec: Algorithms}
\subsection{Algorithm for equality constraints}
We now discuss the algorithm used in order to solve the optimization problem \eqref{eq : Probability of faillure sum of weights}, related to the optimization space of Equation \eqref{eq : Optimization set}.
For every $i=1,\dots, p$, the support of measure $\mu_i$ is transformed into $[0,1]$ using the affine transformation $y = l_i +(u_i - l_i)x$. The sequences of moments of the corresponding measures are written $\mathbf{c}'_i = ({c'}_{1}^{(i)}, \dots, {c'}_{N_i}^{(i)})$ where ${c'}_{j}^{(i)}$ reads
\begin{equation}
{c'}_{j}^{(i)} = \frac{1}{(u_i- l_i)^{j}} \sum_{k=0}^{j} \binom{j}{k} (- l_i)^{j -k} c_{k}^{(i)} \ , \quad \text{ for } j = 1,\dots, N_i\, .
\label{eq : Moment's affine transformation}
\end{equation} 
Given a sequence of moments, it is then possible to calculate the corresponding sequence of canonical moments $\mathbf{p}_i = (p_{1}^{(i)} , \dots, p_{N_i}^{(i)})$. \citet[p. 29]{dette_theory_1997} propose a recursive algorithm named \textit{Q-D algorithm} that allows this computation. It drastically fastens the computational time compared to the raw formula that consists of computing Hankel determinants \citep[p. 32]{dette_theory_1997}.
Generally, the OUQ framework involves low order of moments, typically order 2. In this case we dispose of the simple analytical formulas 
\begin{eqnarray*}
    p_1 = c_1 & \quad ,\quad & p_2=\frac{c_2-c_1^2}{c_1(1-c_1)} \ .
\end{eqnarray*} 
As every measure $\mu_i$ is enforced with $N_i$ constraints we are looking for discrete measures written as convex combination of at most $N_i+1$ Dirac masses. Using Theorem \ref{Th : Generation of measure}, the supports of the measures are the roots of $P_{N_i+1}^{*(i)}$ which depends on the $N_i+1$ free parameters $(p_{N_i+1}^{(i)} , \dots, p_{2N_i+1}^{(i)})$. Those correspond to the parameters of the solver. The corresponding weights are then obtain solving equation (\ref{eq : Vandermonde weight system}). Finally, we evaluate the probability of failure (\textsc{p.o.f})  
\[ P_{\mu}(G(X)\leq h) = \sum_{i_1=1}^{N_1+1} \dots \sum_{i_p=1}^{N_p+1} \omega_{i_1}^{(1)} \dots \omega_{i_p}^{(p)}\ \mathbbm{1}_{\{G(x_{i_1}^{(1)} , \dots, x_{i_p}^{(p)}) \leq h\}} \ .\]
The pseudo code presented in algorithm \ref{Alg : p.o.f} summarizes this procedure.

\algrenewcommand\algorithmicrequire{\textbf{Inputs:}}
\begin{algorithm}[ht]
\caption{Calculation of the \textsc{p.o.f}}\label{Alg : p.o.f}
\begin{algorithmic}[1]
\Require  
	\Statex - lower bounds, $\mathbf{l} = (l_1, \dots, l_p)$
	\Statex - upper bounds, $\mathbf{u} = (u_1, \dots, u_p)$
	\Statex - constraints sequences of moments, $\mathbf{c}_i = (c_{1}^{(i)}, \dots, c_{N_i}^{(i)})$ and its corresponding sequences of canonical moments, $\mathbf{p}_i = (p_{1}^{(i)}, \dots, p_{N_i}^{(i)})$ for $1\leq i \leq p$.
	\Statex
	
\Ensure $p_{j}^{(i)}\in ]0,1[$ for $1\leq j \leq N_i$ and $1\leq i \leq p$ 
	\Statex
\Function{\textsc{p.o.f}}{$p_{N_1+1}^{(1)}, \dots, p_{2N_1+1}^{(1)} ,\dots, p_{N_p+1}^{(p)} , \dots, p_{2N_p+1}^{(p)}$}
	\For {$i = 1, \dots, p$}
		\For {$k = 1, \dots N_i$}
			\State $P_{k+1}^{*(i)}=(X-l_i-(u_i-l_i)(\zeta_{2k}^{(i)} +\zeta_{2k+1}^{(i)}))P_{k}^{*(i)}-(u_i-l_i)^2 \zeta_{2k-1}^{(i)} \zeta_{2k}^{(i)} P_{k-1}^{*(i)}$
		\EndFor 
		\State $x_{1}^{(i)} , \dots, x_{N_i+1}^{(i)} = \text{roots}(P_{N_i+1}^{*(i)})$
		\State $\omega_{1}^{(i)}, \dots, \omega_{N_i+1}^{(i)} = \text{weight}(x_{1}^{(i)}, \dots, x_{N_i+1}^{(i)} , \mathbf{c}_i)$
	\EndFor
	\State \Return $\sum_{i_1=1}^{N_1+1} \dots \sum_{i_p=1}^{N_p+1} \omega_{i_1}^{(1)} \dots \omega_{i_p}^{(p)} \ \mathbbm{1}_{\{G(x_{i_1}^{(1)} , \dots, x_{i_p}^{(p)}) \leq h\}}$
\EndFunction
\end{algorithmic}
\end{algorithm}
Notice that the function \textsc{p.o.f} takes $\sum_{i=1}^p (N_i+1)$ arguments. This parameterization presents two main advantages. The first one is that the parameterization is very easy as every input belongs to $]0,1[$. The other one is that every discrete measure satisfies the constraints and is therefore admissible. The only drawback can be the computational complexity to obtain the roots of high degree polynomial. However, this is neglectible compared to the costly part of the algorithm, that is the large number of evaluation of the underlying code $G$, run $\prod_{i=1}^p (N_i+1)$ times.

The \textsc{p.o.f} function can be easily optimized using any global optimization solvers, such as Differential Evolution \citep{price_differential_2005} or Simulated Annealing algorithm \citep{metropolis_equation_1953}, \citep{kirkpatrick_optimization_1983}. 

\subsection{Modified algorithm for inequality constraints}
In the following, we consider inequality constraints for the moments. The optimization set reads
\[\mathcal{A}=\left\lbrace \mu = \otimes \mu_i \in \bigotimes_{i=1}^{p} \mathcal{M}_i([l_i,u_i])\; | \; \alpha_{j}^{(i)} \leq \mathbb{E}_{\mu_i}[x^{j}] \leq \beta_{j}^{(i)}\ , \ 1\leq j\leq N_i \right\rbrace\ . \]
One can notice that $\alpha_{j}^{(i)} \leq \mathbb{E}_{\mu_i}[x^{j}] \leq \beta_{j}^{(i)}$ is equivalent to enforcing two constraints, thus drastically increasing the dimension of the problem. However, it is possible to restrict ourselves to one constraint. onsider the convex function $\varphi_{j}^{(i)} : x \mapsto (x^{j}-\alpha_{j}^{(i)})(x^{j}-\beta_{j}^{(i)})$, Jensen's inequality states that $\varphi_{j}^{(i)}(\mathbb{E}_{\mu_i}(x)) \leq \mathbb{E}_{\mu_i}(\varphi_{j}^{(i)}(x))$. Therefore, the sole constraint $\mathbb{E}(\varphi_{j}^{(i)}(x)) \leq 0$ ensures $\alpha_{j}^{(i)} \leq \mathbb{E}_{\mu_i}[x^{j}] \leq \beta_{j}^{(i)}$. Without loss of generality we still consider measures $\mu_i$ that are convex combination of $N_i+1$ Dirac masses, for $i=1, \dots, p$.

We now propose a modified version of algorithm \ref{Alg : p.o.f} to solve the problem with inequality constraints. For $i = 1, \dots, p$, we denote the moments lower bounds $\boldsymbol{\alpha}_i = (\alpha_{1}^{(i)}, \dots, \alpha_{N_i}^{(i)})$ and the moments upper bounds $\boldsymbol{\beta}_i = (\beta_{1}^{(i)} , \dots, \beta_{N_i}^{(i)})$. We use Equation \eqref{eq : Moment's affine transformation} to calculate the corresponding moment sequence $\boldsymbol{\alpha}'_i$ and $\boldsymbol{\beta}'_i$ after affine transformation to $[0,1]$.
\begin{algorithm}[ht]
\caption{Calculation of the \textsc{p.o.f} with inequality constraints}\label{Alg : p.o.f inequality}
\begin{algorithmic}[1]
\Require  
	\Statex - lower bounds, $\mathbf{l} = (l_1, \dots, l_p)$
	\Statex - upper bounds, $\mathbf{u} = (u_1, \dots, u_p)$
	\Statex - moments lower bounds, $\boldsymbol{\alpha}'_i = ({\alpha'_{1}}^{(i)} , \dots, {\alpha'_{N_i}}^{(i)})$ for $i=1,\dots,p$
	\Statex - moments upper bounds, $\boldsymbol{\beta}'_i = ({\beta'_{1}}^{(i)} , \dots, {\beta'_{N_i}}^{(i)})$ for $i=1,\dots,p$
	\Statex
\Ensure 
	\Statex $p_{j}^{(i)} \in [0,1]$ and ${c'_{j}}^{(i)} \in [{\alpha'_{j}}^{(i)}, {\beta'_{j}}^{(i)}]$ for $1\leq j \leq N_i$ and $1\leq i \leq p$.  
	\Statex
\Function{P.O.F}{${c'_{1}}^{(1)} , \dots, {c'_{N_i}}^{(1)}, p_{N_1+1}^{(1)} , \dots, p_{2N_1+1}^{(1)} ,\dots, {c'_{1}}^{(p)},\dots {c'_{N_p}}^{(p)}, p_{N_p+1}^{(p)}, \dots, p_{2N_p+1}^{(p)}$}
	\For {$i = 1, \dots, p$}
		\For {$k = 1, \dots N_i$}
			\State $p_{k}^{(i)} = f({c'_{1}}^{(i)} , \dots {c'_{k}}^{(i)}) $ \Comment{f transform moments to canonical moments}
		\EndFor
		\For {$k = 1, \dots N_i$}
			\State $P_{k+1}^{*(i)} = (X-l_i-(u_i-l_i)(\zeta_{2k}^{(i)} +\zeta_{2k+1}^{(i)}))P_{k}^{*(i)}-(u_i-l_i)^2 \zeta_{2k-1}^{(i)} \zeta_{2k}^{(i)} P_{k-1}^{*(i)}$
		\EndFor 
		\State $x_{1}^{(i)}, \dots, x_{N_i+1}^{(i)} = \text{roots}(P_{N_i+1}^{*(i)})$
		\State $\omega_{1}^{(i)} , \dots, \omega_{N_i+1}^{(i)} = \text{weight}(x_{1}^{(i)} , \dots, x_{N_i+1}^{(i)} , \mathbf{c}'_i)$
	\EndFor
	\State \Return $\sum_{i_1=1}^{N_1+1} \dots \sum_{i_p=1}^{N_p+1} \omega_{i_1}^{(1)} \dots \omega_{i_p}^{(p)} \ \mathbbm{1}_{\{G(x_{i_1}^{(1)}, \dots, x_{i_p}^{(p)}) \leq h\}}$
\EndFunction
\end{algorithmic}
\end{algorithm}

The \textsc{p.o.f} of algorithm \ref{Alg : p.o.f inequality} has $p + 2\times \sum_{i=1}^p N_i$ arguments. The new parameters are actually the first ${(N_i)}_{|i=1,\dots,p}$th moments of the inputs that were previously fixed. A new step in the algorithm is needed to calculate the canonical moments up to degree $N_i$ for $i=1, \dots, p$. This ensures that the constraints are satisfied while the canonical moments from degree $N_i+1$ up to degree $2N_i+1$ can vary between $]0,1[$ in order to generate all possible measures. The increase of the dimension does not affect the computational times neither the complexity. Indeed, the main cost still arise from to the large number of evaluation of the code $G$, that remains equal to $\prod_{i=1}^p (N_i+1)$. Once again this new \textsc{p.o.f} function can be optimized using any global solver. 

\section{Numerical tests on a toy example} 
\label{sec: Numerical tests on toy example}
\subsection{Presentation of the hydraulic model}
\label{subsec:hydraulic model}
In the following, we address a simplified hydraulic model \citep{keller_estimation_2012}. This code calculates the water height $H$ of a river subject to a flood event. It takes four inputs whose initial joint distribution is detailed in Table \ref{tab:initial distribution hydraulic model}. It is possible to calculate the quantile for those particular distributions. The result is given in Figure \ref{fig : Moment Order Constraints}. However, as we desire to evaluate the robust quantile over a class of measures, we present in Table \ref{tab : Constraints for hydraulic model} the corresponding moment constraints that the variables must satisfy. The constraints are calculated based on the initial distributions, while the bounds are chosen in order to match the initial distributions most representative values.

\begin{table}[ht]
	\centering
	\begin{tabular}{|l|c|}
		\hline
		Variable & Distribution \\ 
		\hline
		$Q$: annual maximum flow rate & $Gumbel(mode=1013, scale=558)$ \\
		$K_s$: Manning-Strickler coefficient & $\mathcal{N}(\overline{x}=30,\sigma=7.5)$ \\
		$Z_v$: Depth measure of the river downstream & $\mathcal{U}(49,51)$ \\
		$Z_m$: Depth measure of the river upstream & $\mathcal{U}(54,55)$  \\
		\hline
	\end{tabular}
	\caption{Initial distribution of the 4 inputs of the hydraulic model.}
	\label{tab:initial distribution hydraulic model}
\end{table}
\begin{table}[ht]
	\centering
	\begin{tabular}{|l|c|c|c|c|}
		\hline
		Variable & Bounds & Mean & \specialcell{Second order \\ moment} & \specialcell{Third order \\ moment} \\ 
		\hline
		$Q$ & $[160, 3580]$ & $1320.42$ & $2.1632 \times 10^6$ & $4.18\times 10^9$ \\
		$K_s$ & $[12.55, 47.45]$ & $30$ & $949$ & $31422$ \\
		$Z_v$ & $[49,51]$ & $50$ & $2500$ & $125050$ \\
		$Z_m$ & $[54,55]$ & $54.5$ & $2970$ & $161892$ \\
		\hline
	\end{tabular}
	\caption{Corresponding moment constraints of the 4 inputs of the hydraulic model.}
	\label{tab : Constraints for hydraulic model}
\end{table}
\noindent The height of the river $H$ is calculated through the analytical model
\begin{equation}
	H = \left(\frac{Q}{300K_s \sqrt{\frac{Z_m - Z_v}{5000}}} \right)^{3/5} \ .
	\label{eq:Hydraulic Model}
\end{equation}
We are interested in the flood probability $\sup_{\mu\in\mathcal{A}} P(H \geq h)$.

\subsection{Maximum constraints order influence}

We will compare the influence of the constraint order on the optimum. The initial distributions and the constraints enforced are available in Table \ref{tab : Constraints for hydraulic model}. The value of the constraints correspond to the moments of the initial distributions. Figure \ref{fig : Moment Order Constraints} shows how the size of the optimization space $\mathcal{A}$ decreases by adding new constraints. A differential evolution solver was used to perform the optimization. The initial CDF was computed with a Monte Carlo algorithm. One can observe that enforcing only one constraint on the mean will give a robust quantile significantly larger than the one of the initial distribution. On the other hand, adding three constraints on every inputs reduces quite drastically the space so that the robust quantiles found are closed to the one of the initial CDF.

\begin{figure}[H]
	\centering
	\clipbox{0pt 0pt 0pt 18.2pt}{\includegraphics[scale=0.35]{./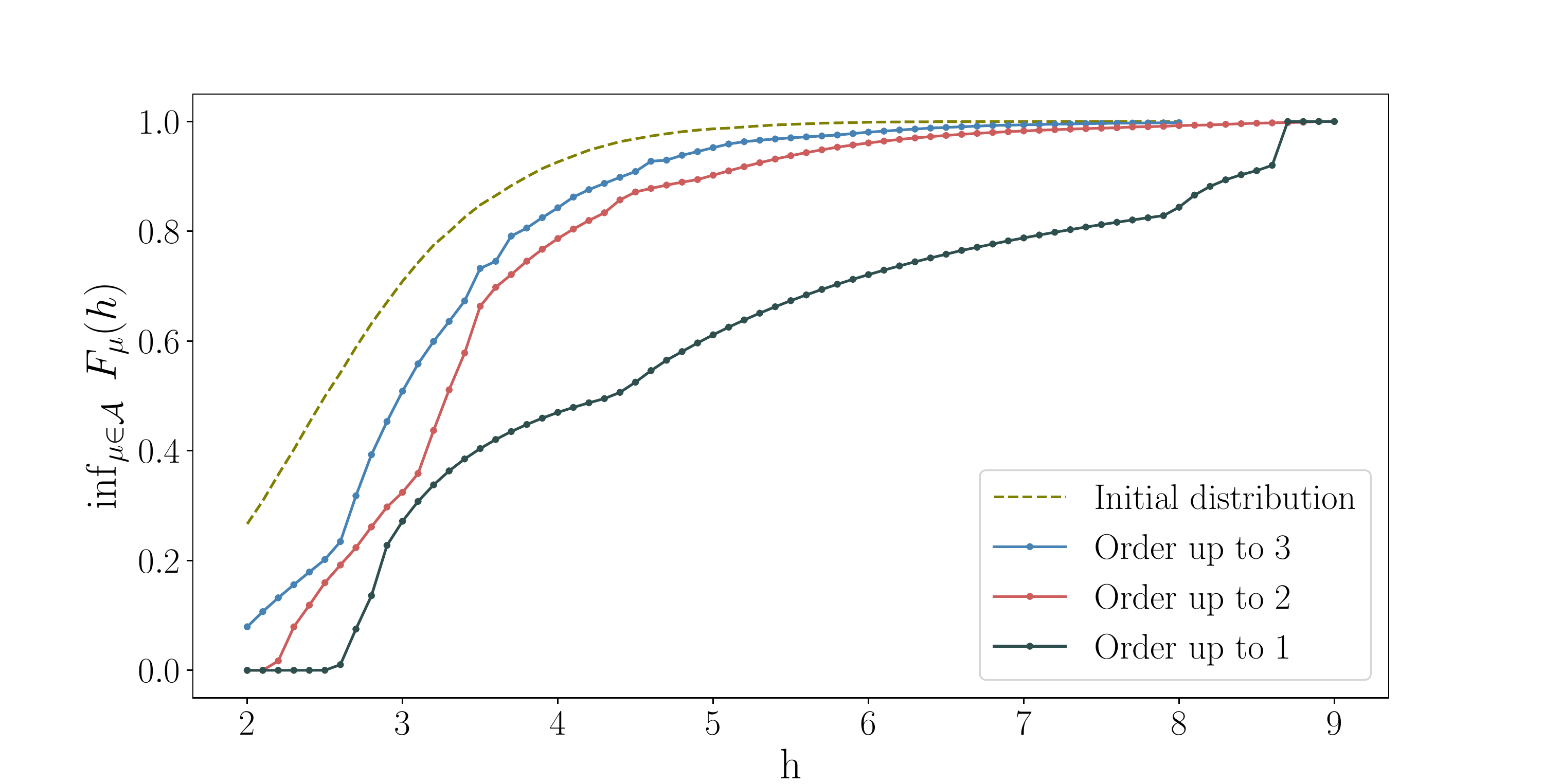}}
	\caption{Influence of the number of constraints enforced on the minimal CDF.} 
	\label{fig : Moment Order Constraints} 
\end{figure}

\subsection{Comparison with the Mystic framework}
We highlight the interest of the canonical moments parameterization by comparing its performances with the Mystic framework \citep{mckerns_optimal_2012}. Mystic is a Python toolbox suitable for OUQ. In Figure \ref{fig : Comparison Canonical Mystic HydraulicModel} one can see the comparison beween Mystic and our algorithm. Both computation were realized with an identical solver, and computational times were similar ($\approx$30 min). We enforced one constraint on the mean of each input (see Table \ref{tab : Constraints for hydraulic model}). The performance of the Mystic framework is outperformed by our algorithm. Indeed, the generation of the weights and support points of the input distributions is not optimized in the Mystic framework. Hence, an intermediary transformation of the measure is needed in order to respect the constraints. During this transformation, the support points can be send out of bounds so that the measure is no more admissible. Many population vectors are rejected, which reduces the overall performance of the algorithm. Meanwhile, our algorithm warrants the exploration of the whole admissible set of measure without any vector rejection. 

\begin{figure}[htb]
	\centering
	\clipbox{0pt 0pt 0pt 18.2pt}{\includegraphics[scale=0.35]{./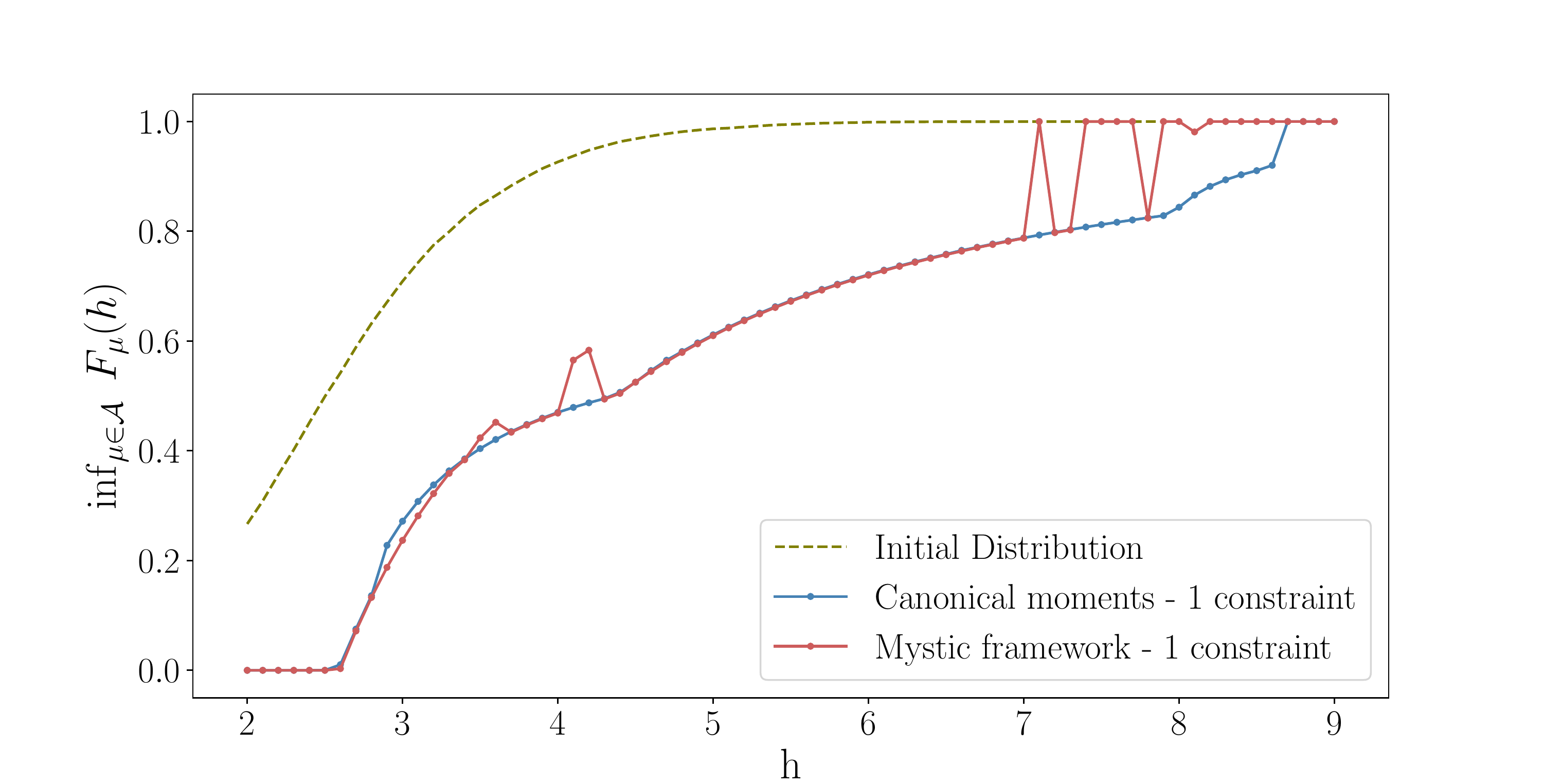}}
	\caption{Comparison of the performance of the Mystic framework and our algorithm on the hydraulic code.} 
	\label{fig : Comparison Canonical Mystic HydraulicModel} 
\end{figure}

\section{Real case study}
\label{sec: Real case study}
The CATHARE code simulates the pressure and temperature inside a simplified reactor during a cold leg Intermediate Break Loss Of Coolant Accident (IBLOCA) \citep{iooss_advanced_2018}. An output of the code CATHARE is time dependent and is depicted in Figure \ref{fig:RAW CATHARE}. In this work, the variable of interest is only the maximal temperature inside the reactor. Ultimately we wish to obtain a robust quantile of high order (around 0.95-0.99) of this quantity.
Two test cases have been realized on a water pressured reactor built at smaller scale (1:1 in height and 1:48 in volume, Figure \ref{fig:REP maquette}). The core of the replica is heated with an electric heater, while the breach happens to be on the cold leg section. The first breach size was 17$\%$ of the section, and the second 13$\%$. 

The CATHARE code takes 27 inputs in its simplified version, all representing physical parameters and assumed mutually independent. One run of the code takes approximately 20 minutes which makes it very costly for our purpose. Because we use the code as black-box, the use of a surrogate model is mandatory.

\begin{figure}[htb]
	\begin{minipage}[t]{0.48\linewidth}
		\centering
		\includegraphics[scale=0.18]{./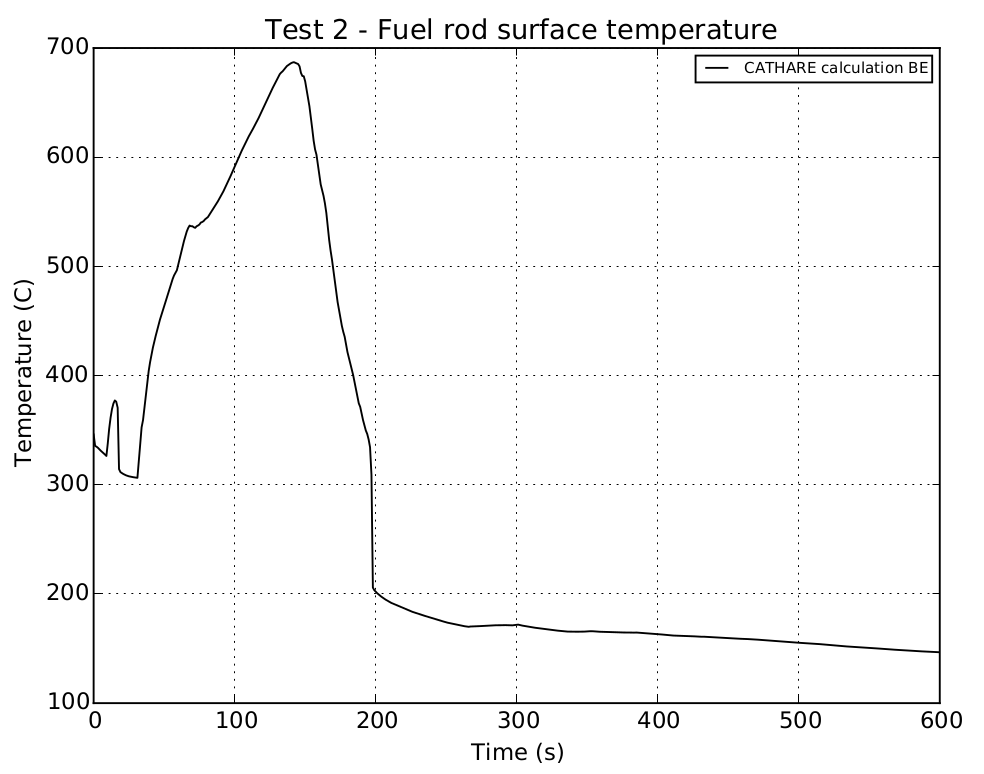}
		\caption{CATHARE temperature output for nominal parameters, the maximal temperature value is 687$^\circ$C after 140 secondes.}
		\label{fig:RAW CATHARE}
	\end{minipage}
	\hfill
	\begin{minipage}[t]{0.48\linewidth}
		\centering
		\includegraphics[scale=0.31]{./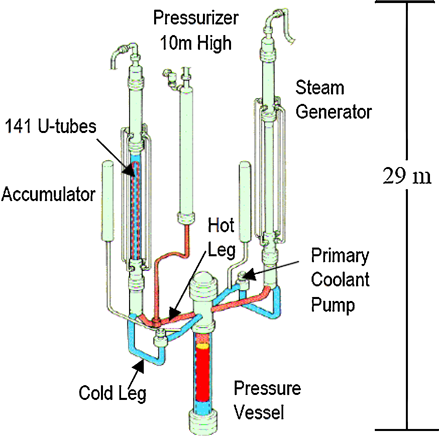}
		\caption{The replica of a water pressured reactor, with the hot and cold leg.}
		\label{fig:REP maquette}
	\end{minipage}
\end{figure}  

Gaussian process regression, also known as Kriging \citep{rasmussen_gaussian_2005}, makes the assumption that the response of the complex code is a realization of a Gaussian process, conditioned by some code observations. This approach provides the basis for statistical inference. We used the Openturns software \citep{baudin_title_2017} to compute the surrogate model. The covariance kernel of our Gaussian process was an anisotropic Mat\'ern $5/2$ one. We used an available Monte Carlo sample of 1000 code evaluations to condition our metamodel. The analytic predictivity coefficient \citep{gratiet_metamodel-based_2017} is equal to $Q^2=0.92$. 

\begin{table}[ht]
	\centering
	\begin{tabular}{|l|c|c|c|c|}
		\hline
		Variable & Bounds & \specialcell{Initial distribution \\ (truncated)} & Mean & \specialcell{Second order \\ moment} \\ 
		\hline
		$n^\circ 10$ & $[0.1, 10]$ & $LogNormal(0,0.76)$ & $1.33$ & $3.02$ \\
		$n^\circ 22$ & $[0, 12.8]$ & $Normal(6.4,4.27)$ & $6.4$ & $45.39$ \\
		$n^\circ 25$ & $[11.1, 16.57]$ & $Normal(13.79$ & $13.83$ & $192.22$ \\
		$n^\circ 2$ & $[-44.9, 63.5]$ & $Uniform(-44.9,63.5)$ & $9.3$ & $1065$ \\
		$n^\circ 12$ & $[0.1, 10]$ & $LogNormal(0,0.76)$ & $1.33$ & $3.02$ \\
		$n^\circ 9$ & $[0.1, 10]$ & $LogNormal(0,0.76)$ & $1.33$ & $3.02$ \\
		$n^\circ 14$ & $[0.235, 3.45]$ & $LogNormal(-0.1,0.45)$ & $0.99$ & $1.19$ \\
		$n^\circ 15$ & $[0.1, 3]$ & $LogNormal(-0.6,0.57)$ & $0.64$ & $0.55$ \\
		$n^\circ 13$ & $[0.1, 10]$ & $LogNormal(0,0.76)$ & $1.33$ & $3.02$ \\
		\hline
	\end{tabular}
	\caption{Corresponding moment constraints of the 9 most influential inputs of the CATHARE model.}
	\label{tab : Constraints for CATHARE model}
\end{table}

We already mention the fact that the main cost of the computation is due to the high number of code calls. The code needs to be evaluated on every point of the $p$-dimensional grid (see Equation \eqref{eq : Probability of faillure sum of weights}). The growth of the grid is exponential with the dimension. When $N_i$ constraint are enforced on every input $\mu_i$ for $1\leq i \leq p$, the size of the grid is exactly $\prod_{i=1}^p (N_i + 1)$. So that one estimation of the \textsc{p.o.f} can already be time consuming. It is realistic to say that the overall methodology must be limited to dimensions lower than 10. 
Because the CATHARE code takes 27 parameters, we therefore applied a screening strategy (reproduce in \cite{iooss_advanced_2018}) and highlighted 9 of the most influential parameters.

\begin{figure}[htb]
	\centering
	\clipbox{0pt 0pt 0pt 18.2pt}{\includegraphics[scale=0.35]{./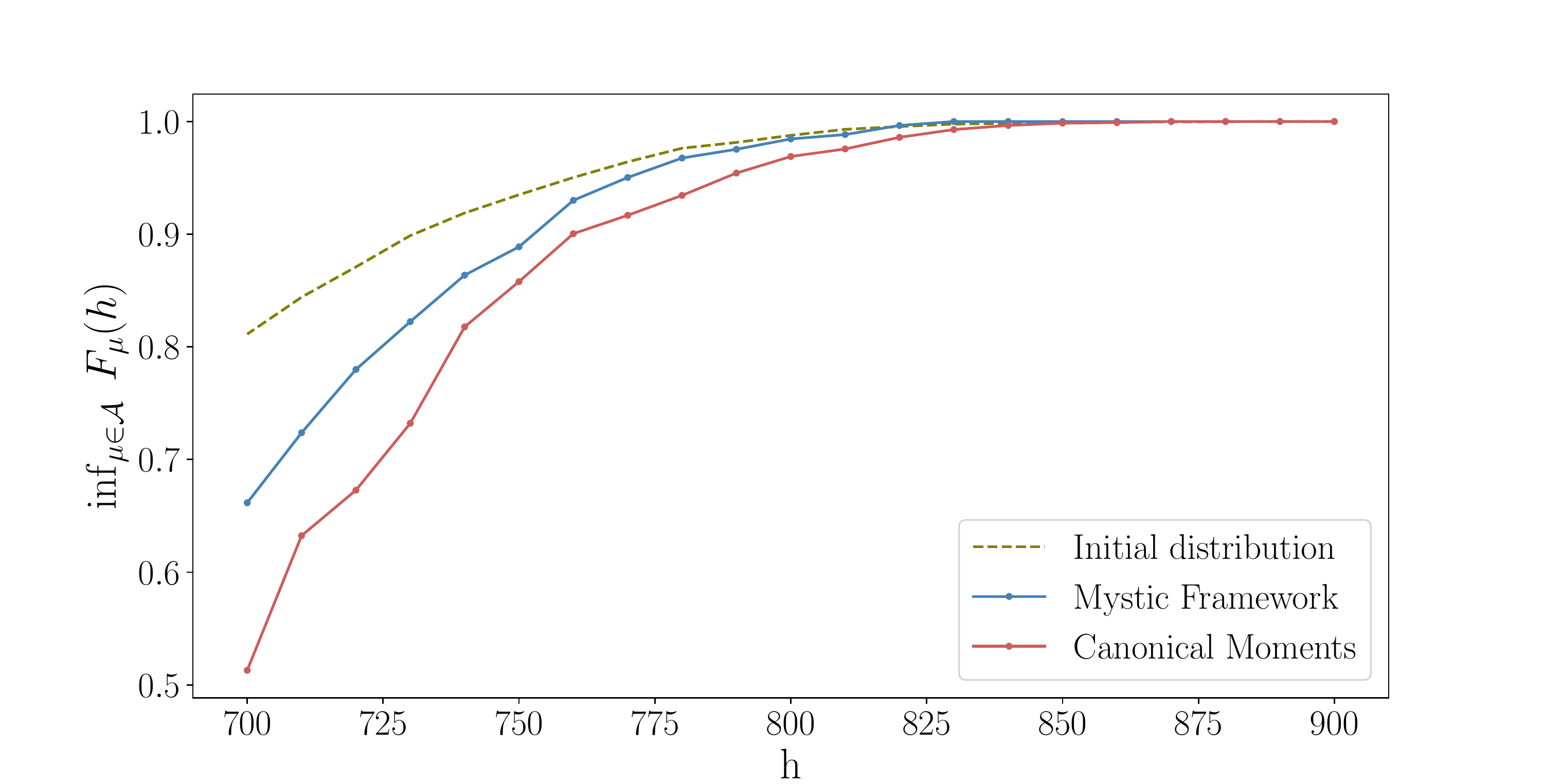}}
	\caption{Comparison of the performance and our algorithm on the 9 dimensional restricted CATHARE code.} 
	\label{fig : Comparison Canonical Mystic CATHARE Model} 
\end{figure}

Two constraints were enforced on their first two moments as displayed in Table \ref{tab : Constraints for CATHARE model}. We successfully applied the methodology on the 9 dimensional restricted code. However, the computation was one day long for each threshold. We restricted the computation of the CDF to a small specific area of interest (high quantile 0.5-0.99) and we parallelized the task so that the computation did not exceed one week. One can compare, in Figure \ref{fig : Comparison Canonical Mystic CATHARE Model}, the results of the computation realized with the Mystic framework and our algorithm. It confirms the difficulty for the Mystic framework to explore the whole space of admissible measures. On the other hand, this proves the efficiency of canonical moments to solve this optimization problem.

\section{Conclusion}
\label{sec: Conclusion}

We successfully adapted the theory of canonical moment into an improved methodology for solving OUQ problems. The restriction to moment constraints suits most of the practical engineering cases. Our algorithm shows very good performances and great adaptability to any constraints order. The optimization is subject to the curse of dimension and should be kept under 10 parameters, else dimension reduction strategies should be considered. The use of a metamodel in our real case study introduces a new source of uncertainty. In further work we will study how to take it into account and quantify its impact on the quantile estimation (see for instance \cite{bect_sequential_2012}). In practical engineering cases, the inputs correspond to physical parameters, however, the joint distribution of the optimum is a discrete measure. One can criticize that it hardly corresponds to a real word situation. In order to address this issue, we will search for new optimization sets whose extreme points are not discrete measure. One can find in the literature of robust bayesian analysis the $\varepsilon$-contamination class or the unimodal class, that might be of some interest in this situation. New measures of risk will also be explored, such as superquantile \citep{rockafellar_random_2014}.

\appendix
\section*{Appendix}
\subsection{Proof of duality proposition \ref{THM : DUALITY THEOREM}}\label{app : proof}

\begin{proof}
	we denote by $\displaystyle a = \sup_{\mu \in \mathcal{A}} \bigg[ \inf \left\lbrace h \in \mathbb{R}\ ; \ F_{\mu}(h) \geq p \right\rbrace\bigg]\ $ and $\displaystyle\ b = \inf \left\lbrace h  \in \mathbb{R}\; | \; \inf_{\mu\in\mathcal{A}} F_{\mu}(h) \geq p \right\rbrace$. In order to prove $a=b$, we proceed in two step. First step, we have
	\begin{eqnarray*}
		 & & \text{for all } h \geq b \ ;\  \inf_{\mu\in \mathcal{A}} F(\mu;h)  \geq p\ ,\\
		\Leftrightarrow & & \text{for all } h \geq b \ \text{ and for all } \mu\in\mathcal{A}\ ;\  F(\mu;h)  \geq p\ ,\\
		\Leftrightarrow & & \text{for all } \mu \in \mathcal{A}\ \text{ and for all } h \geq b \ ;\   F(\mu;h)  \geq p\ ,\\
		\Rightarrow & & \text{for all } \mu \in \mathcal{A}\ ;\ \inf\{h \in \mathbb{R} \, | \,  F(\mu;h) \geq p\}  \leq b \ ,
	\end{eqnarray*}
	so that $b \geq a$. Second step, because $a$ is the sup of the quantile,
	\begin{eqnarray*}
			& & \text{for all } h\geq a\ ;\ \text{ for all }\mu\in\mathcal{A}\ ;\   F(\mu; h)  \geq p\ ,\\
			\Rightarrow & &  \text{for all } h\geq a\ ;\   \inf_{\mu\in\mathcal{A}} F(\mu; h)  \geq p\ ,
	\end{eqnarray*}
	so that
	\[\inf\ \bigg[ h  \in \mathbb{R} \ | \ \inf_{\mu \in \mathcal{A}}  F_{\mu}(h) \geq p \bigg]   \leq a \ ,\]
	and $b\leq a$.
\end{proof}	

\bibliography{Biblio.bib}

\end{document}